\newtheorem{theorem}{Theorem}[section]
\newtheorem{question}[theorem]{Question}
\newtheorem{lemma}[theorem]{Lemma}
\newtheorem{corollary}[theorem]{Corollary}
\theoremstyle{definition}
\newtheorem{definition}[theorem]{Definition}
\numberwithin{equation}{section}
\newcommand{\zfc}{\mathnormal{\mathsf{ZFC}}}
\newcommand{\inter}{\mathop{\bigcap}\limits}
\newcommand{\ch}{\mathnormal{\mathsf{CH}}}
\newcommand{\fs}{\mathop{\mathrm{FS}}}
\newcommand{\fu}{\mathop{\mathrm{FU}}}
\newcommand{\fin}{\mathnormal{\mathsf{FIN}}}
\newcommand{\covm}{\mathnormal{\mathrm{cov}(\mathcal M)}}
\newcommand{\nonn}{\mathnormal{\mathrm{non}(\mathcal N)}}
\newcommand{\covn}{\mathnormal{\mathrm{cov}(\mathcal N)}}
\newcommand{\mm}{\mathnormal{\mathbb{MM}}}
\DeclareMathOperator{\supp}{supp}
\DeclareMathOperator{\cf}{cf}
\DeclareMathOperator{\suc}{succ}
\DeclareMathOperator{\ran}{ran}
\author[D. Fern\'andez]{David~Jos\'e Fern\'andez-Bret\'on}
\address{
Department of Mathematics, University of Michigan \\
2074 East Hall, 530 Church Street \\
Ann Arbor, Michigan, 48109, USA}
\curraddr{
Kurt G\"odel Research Center for Mathematical Logic \\
University of Vienna \\
W\"aringer Stra\ss e 25, 1090 Wien, Austria.
}
\email{david.fernandez-breton@univie.ac.at}
\urladdr{https://homepage.univie.ac.at/david.fernandez-breton/}
\keywords{ultrafilters, Hindman's theorem, iterated forcing, proper forcing, cardinal characteristics of the continuum.}
\subjclass[2010]{Primary 03E35, Secondary 03E17, 54D80, 22A15.}
\begin{document}

\title{Stable ordered union ultrafilters and $\covm<\mathfrak c$}

\begin{abstract}
A union ultrafilter is an ultrafilter over the finite subsets of $\omega$ that has a base of sets of the form $\fu(X)$, where $X$ is an infinite pairwise disjoint family and $\fu(X)=\{\bigcup F\big|F\in[X]^{<\omega}\setminus\{\varnothing\}\}$. The existence of these ultrafilters is not provable from the $\zfc$ axioms, but is known to follow from the assumption that $\covm=\mathfrak c$. In this article we obtain various models of $\zfc$ that satisfy the existence of union ultrafilters while at the same time $\covm<\mathfrak c$.
\end{abstract}

\maketitle

\section{Introduction}

A union ultrafilter is an ultrafilter that has a basis of $\fu$-sets. Recall that an $\fu$-set (FU stands for ``finite unions'') is a set of the form 
\begin{equation*}
\fu(X)=\left\{\bigcup_{x\in F}x\bigg|F\in[X]^{<\omega}\setminus\{\varnothing\}\right\},
\end{equation*}
where $X\subseteq[\omega]^{<\omega}$ is an infinite pairwise disjoint family of finite subsets of $\omega$. Hindman's finite unions theorem~\cite{hindmanthm} states that, for every finite partition of $[\omega]^{<\omega}$, there exists an infinite pairwise disjoint set $X$ such that $\fu(X)$ is contained within one single element of the partition.\footnote{Hindman's theorem was originally stated in terms of partitioning $\mathbb N$ (rather than $[\omega]^{<\omega}$) and obtaining an infinite set whose finite sums (rather than finite unions) are all in the same cell of the partition. The equivalence between these two versions of the theorem is explained in~\cite[p. 1]{baumgartner-short-proof-of-hindman}.} Thus we can think of union ultrafilters as ultrafilters that contain witnesses for every instance of Hindman's finite unions theorem. In that sense, these ultrafilters arise naturally from Hindman's theorem in much the same way that selective ultrafilters arise naturally from Ramsey's theorem. There are a few variants of union ultrafilters, so we will now proceed to define all of them.

\begin{definition}
We use the symbol $\fin$ to denote the set $[\omega]^{<\omega}\setminus\{\varnothing\}$ of finite subsets of $\omega$. Let $X\subseteq\fin$ and let $u$ be an ultrafilter on $\fin$.
\begin{enumerate}
\item $u$ is said to be a \textit{union ultrafilter} if $(\forall A\in u)(\exists X)(X\text{ is an infinite pairwise }\newline\text{disjoint family}\wedge\fu(X)\in u\wedge\fu(X)\subseteq A)$.
\item $X$ is said to be \textit{ordered}, or a \textit{block sequence}, if $(\forall x,y\in X)(\max(x)<\min(y)\vee\max(y)<\min(x))$. On occasion we will say that a sequence $\vec{x}=\langle x_n\big|n<\omega\rangle$ is a block sequence; by this we imply the assumption that $(\forall n<\omega)(\max(x_n)<\min(x_{n+1}))$.
\item The union ultrafilter $u$ is said to be \textit{ordered} if, for every $A \in u$, there exists an infinite block sequence $X$ such that $u\ni\fu(X)\subseteq A$.
\item The union ultrafilter $u$ is said to be \textit{stable} if, for every countable family $\{A_n\big|n<\omega\}\subseteq u$, there exists an infinite pairwise disjoint family $X$ such that, for all $n<\omega$, there is a finite $F$ such that $\fu(X\setminus F)\subseteq A_n$.
\end{enumerate}
\end{definition}

These ultrafilters have many desirable properties from the perspective of algebra in the \v{Cech}--Stone compactification. Recall that the \v{C}ech--Stone compactification $\beta\fin$ is the set of all ultrafilters over $\fin$. This is a compact Hausdorff space, when equipped with the topology that has as basic open sets all sets of the form
\begin{equation*}
\overline{A}=\{u\in\beta\fin\big|A\in u\},
\end{equation*}
where $A\subseteq\fin$; these basic open subsets turn out to be, in fact, clopen. Furthermore, this compact Hausdorff space contains a dense copy of $\fin$ embedded as a discrete subspace, via the embedding $x\longmapsto\{A\subseteq\fin\big|x\in A\}$. There is a correspondence between closed subsets of $\beta\fin$ and filters on $\fin$, namely, if $\mathscr F$ is a filter, we can associate to it the set $\{u\in\beta\fin\big|\mathscr F\subseteq u\}$ of ultrafilters extending it, and this set will be closed; conversely, if $F\subseteq\beta\fin$ is a closed set, then $\mathscr F=\bigcap F$ will be a filter; these two correspondences are inverses of each other. The reader interested in the minutiae of the \v{C}ech--Stone compactification should see~\cite{hindmanstrauss}.

A set $A\subseteq\fin$ will be called {\it min-unbounded} if it contains elements with arbitrarily large minima; a filter $\mathscr F$ on $\fin$ will be called {\it min-unbounded} if all of its elements are min-unbounded. Notice that a filter is min-unbounded if and only if it extends the filter generated by the family $\{\{x\in\fin\big|n<\min(x)\}\big|n<\omega\}$; as a consequence of that, the set of all min-unbounded ultrafilters, denoted by $\gamma\fin$, will be a closed (hence compact) subspace of $\beta\fin$. This subspace can be equipped with a semigroup operation, which we will denote by $+$, and which is given by
\begin{equation*}
u+v=\{A\subseteq\fin\big|\{x\in\fin\big|\{y\in\fin\big|\max(x)<\min(y)\wedge x\cup y\in A\}\in v\}\in u\}
\end{equation*}
(the set $\{y\in\fin\big|\max(x)<\min(y)\wedge x\cup y\in A\}$ will be usually abbreviated by writing $-x+A$). This operation turns $\gamma\fin$ into a right-topological semigroup (this means that, for every $v\in\gamma\fin$, the right translation $u\longmapsto u+v$ is a continuous function). Recall that the Ellis--Numakura Lemma (\cite{ellis-lemma,numakura-lemma}; see~\cite[Theorem 2.5]{hindmanstrauss}) states that every compact right-topological semigroup has idempotent elements (i.e., elements satisfying $u+u=u$). Thus, $\gamma\fin$, as well as all of its closed subsemigroups, must have idempotent elements. Idempotent ultrafilters are very useful in providing a beautiful proof of Hindman's finite-unions theorem (the Galvin--Glazer proof, which can be found in~\cite[Theorem 5.8, Corollaries 5.9 and 5.10]{hindmanstrauss}); on the flip side, union ultrafilters --ultrafilters that arise naturally from the aforementioned theorem-- turn out to be idempotent elements of $\gamma\fin$. Union ultrafilters have a significant amount of nice properties. We have already mentioned their idempotence; it is worth mentioning that union ultrafilters also satisfy the so-called \textit{trivial sums property}. This means that they can only be decomposed as a sum of two ultrafilters in the trivial way: whenever $u=v+w$, for $v,w\in\gamma\fin$, it must be the case that $v=w=u$.

The additional properties of being ordered and stable provide us with ultrafilters that witness a higher dimensional version of Hindman's theorem, namely, the so-called Milliken--Taylor theorem. That is, a union ultrafilter $u$ will be stable and ordered if and only if for every colouring $c:[\fin]^2\longrightarrow 2$, there exists a block sequence $X$ such that $\fu(X)\in u$ and the set
\begin{equation*}
[\fu(X)]^2_<=\{(x,y)\in\fu(X)\times\fu(X)\big|\max(x)<\min(y)\vee\max(y)<\min(x)\}
\end{equation*}
is $c$-monochromatic; see~\cite[Theorem 4.2]{blassunion}. For this reason, some authors use the terminology of {\it Milliken--Taylor ultrafilters} to refer to stable ordered union ultrafilters; this was the terminology first used by Matet~\cite{matet1,matet2}, who introduced these ultrafilters independently from and roughly at the same time as Blass. For a more modern example of using this terminology, see~\cite{heike-nearcoherencespectrum}.

The existence of union ultrafilters is known to be both consistent and independent of the $\zfc$ axioms. Independence was established by Blass and Hindman~\cite[Theorem 3]{blasshindman}, who showed that the existence of a union ultrafilter $u$ implies the existence of two, not Rudin--Keisler equivalent, P-points (namely the Rudin--Keisler images $\max(u)$ and $\min(u)$ of $u$ under the mappings $\max:\fin\longrightarrow\omega$ and $\min:\fin\longrightarrow\omega$, respectively). On the other hand, the consistency of the existence of union ultrafilters was established inadvertently by Hindman~\cite[Theorem 3.3]{hindman}, who showed that, if we assume both $\ch$ and Hindman's theorem (which was still not known to be true at the time), then there exist union ultrafilters. Nowadays, we know, thanks to the work of Eisworth~\cite{eisworth-union}, that in fact it suffices to assume that $\covm=\mathfrak c$ to ensure the existence of union ultrafilters. Recall that the cardinal characteristic $\covm$ is defined to be the least cardinality of a family of meagre sets whose union completely covers the real line; equivalently, $\covm$ is the least cardinality of a family of dense subsets of the partial order corresponding to Cohen's forcing for which there is no filter meeting them all. Eisworth proved, furthermore, that the cardinal characteristic equality $\covm=\mathfrak c$ is equivalent to what one might naturally call \textit{generic existence} of stable ordered union ultrafilters, that is, the statement that every filter generated by $<\!\mathfrak c$ sets of the form $\fu(X)$ can be extended to a stable ordered union ultrafilter~\cite[Theorem 9]{eisworth-union}.

In this article we explore models of $\zfc$ that satisfy the existence of union ultrafilters together with the strict inequality $\covm<\mathfrak c$. A few such models have already been obtained, such as some of the ones considered in~\cite{heike-nearcoherencespectrum};\footnote{In these models, moreover, a significant amount of precise information can be obtained about the number of non-isomorphic Ramsey ultrafilters that populate the model.} another notable example is Sacks model obtained by iterating perfect set forcing with countable supports~\cite{conmichael-diamond-union}. In this article we exhibit various other ways to obtain such models. In Section 2 we provide a family of such models obtained by means of finite support iteration of ccc forcing notions. In Section 3 we provide yet another model, this time obtained by means of a countable support iteration of proper forcing notions. Finally, in Section 4 we provide a model of set theory where a single-step forcing adjoining a generic stable ordered union ultrafilter produces a generic extension satisfying $\covm<\mathfrak c$ (an iterated forcing argument will be needed, however, to obtain an adequate ground model in which to perform this single-step forcing).

\section{Cofinally many Cohen reals}

In this section we will see that any iteration of forcings, of uncountable regular length, which adds Cohen reals cofinally often, must contain a stable ordered union ultrafilter. We will take advantage of this to define some specific such iterations that force the generic extension to satisfy $\covm<\mathfrak c$. The following lemma, which will be needed later, is well known if one replaces ``subbasis'' with ``basis'', but currently lacks (to the best of my knowledge) a published written proof in the form stated below. Recall that a subset $\mathscr S$ of a filter $\mathscr F$ is said to be a {\it subbasis} if its closure under finite intersections forms a basis for $\mathscr F$, in other words, if $\mathscr F=\{X\big|(\exists F\in[\mathscr S]^{<\omega})(\cap_{Y\in F}Y\subseteq X)\}$.

\begin{lemma}\label{extendtoidempotent}
Let $\mathscr F$ be a min-unbounded filter on $\fin$ with a subbasis $\mathscr S$ consisting of $\fu$-sets. Then, $\mathscr F$ can be extended to an idempotent ultrafilter $u\in\gamma\fin$.
\end{lemma}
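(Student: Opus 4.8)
The plan is to construct the idempotent ultrafilter $u$ as an element of a suitable compact subsemigroup of $\gamma\fin$, invoking the Ellis--Numakura lemma. The natural candidate is the closed set $K = \{v \in \gamma\fin \mid \mathscr F \subseteq v\}$; this is a nonempty (since $\mathscr F$ is a proper filter refining the min-unbounded filter) compact subspace of $\gamma\fin$. The entire difficulty is concentrated in showing that $K$ is a \emph{subsemigroup}, i.e., that $v + w \in K$ whenever $v, w \in K$; once that is known, Ellis--Numakura hands us an idempotent $u \in K$, and $\mathscr F \subseteq u$ with $u \in \gamma\fin$ is exactly what is wanted.

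To prove closure under $+$, since $\mathscr F$ is generated by finite intersections of the $\fu$-sets in $\mathscr S$, and $v+w$ is a filter, it suffices to check that $\fu(X) \in v + w$ for each $\fu(X) \in \mathscr S$ (the general element of $\mathscr F$ is a superset of a finite intersection of such $\fu(X)$'s, and filters are closed under supersets and finite intersections). So fix $\fu(X) \in \mathscr S$ with $X$ an infinite pairwise disjoint family; we must show
\begin{equation*}
\{x \in \fin \mid -x + \fu(X) \in w\} \in v.
\end{equation*}
The key combinatorial observation is the standard one underlying the fact that $\fu$-sets generate subsemigroups: if $x \in \fu(X)$ and $Y = \{y \in X \mid \min(y) > \max(x)\}$, then $\fu(Y) \subseteq -x + \fu(X)$, because for any $y \in \fu(Y)$ we have $\max(x) < \min(y)$ and $x \cup y = \bigcup(F \cup G)$ where $x = \bigcup F$, $y = \bigcup G$ with $F, G \in [X]^{<\omega}$ disjoint, so $x \cup y \in \fu(X)$. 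Now $Y$ is a cofinite subfamily of $X$, so $\fu(Y)$ differs from $\fu(X)$ only by removing those unions that meet the finitely many discarded members of $X$; since $w \in K$ is min-unbounded and contains $\fu(X)$, a short argument shows $\fu(Y) \in w$ as well (the set of $z \in \fu(X)$ using one of the finitely many small blocks has bounded minimum, hence lies outside the min-unbounded ultrafilter $w$). Therefore $-x + \fu(X) \in w$ for \emph{every} $x \in \fu(X)$, so the set $\{x \mid -x + \fu(X) \in w\}$ contains $\fu(X) \in v$, hence belongs to $v$, as required.

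The main obstacle, and the reason the lemma is stated with ``subbasis'' rather than ``basis'', is precisely the reduction in the first sentence of the previous paragraph: one has to be careful that establishing $\fu(X) \in v+w$ for the generating $\fu$-sets $X \in \mathscr S$ genuinely suffices, i.e., that one does not need $\fu(X') \in v+w$ for the $\fu$-sets $X'$ appearing in a \emph{basis} obtained by closing $\mathscr S$ under intersection (a finite intersection of $\fu$-sets need not be an $\fu$-set). This works because $v+w$ is a genuine filter, so once it contains each $\fu(X)$ for $X \in \mathscr S$ it automatically contains all their finite intersections and all supersets thereof, i.e., all of $\mathscr F$. I would also remark that min-unboundedness of $\mathscr F$ is used to guarantee $K \subseteq \gamma\fin$ is nonempty and to keep everything inside the right-topological semigroup $\gamma\fin$ where Ellis--Numakura applies; the min-unboundedness of the ultrafilters in $K$ is what makes the ``cofinite modification'' step $\fu(Y) \in w$ go through.
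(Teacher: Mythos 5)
Your proof is correct and takes essentially the same approach as the paper: realize the set of ultrafilters extending $\mathscr F$ as a closed (hence compact) subsemigroup of $\gamma\fin$ by verifying $\fu(X)\in v+w$ for each subbasic $\fu(X)$, using min-unboundedness to see that a tail of $\fu(X)$ lies in $w$ and is contained in $-x+\fu(X)$, and then apply Ellis--Numakura. Your choice of tail $Y=\{y\in X\mid\min(y)>\max(x)\}$ is, if anything, slightly more careful than the paper's $X\setminus\{x_1,\ldots,x_n\}$ about the clause $\max(x)<\min(y)$ in the definition of $-x+\fu(X)$.
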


\begin{proof}
We will show that the closed subset $S$ of $\gamma\fin$ consisting of ultrafilters that extend $\mathscr F$ must be a subsemigroup of $\gamma\fin$. This will allow us to conclude that $S$ is a compact right-topological semigroup, and so it must contain some idempotent element $u$ by the Ellis--Numakura lemma; then by definition $\mathscr F\subseteq u$. To see that $S$ is indeed a subsemigroup, let $v,w\in S$ be two ultrafilters. We want to show that $v+w\in S$, in other words, that $\mathscr F\subseteq v+w$; since $\mathscr S$ is a subbasis for $\mathscr F$, this will be the case if and only if $\mathscr S\subseteq v+w$. So take an arbitrary element of $\mathscr S$; by assumption, such an element is of the form $\fu(X)$ for $X\in[\fin]^\omega$ an infinite pairwise disjoint family. Notice that, since $\mathscr F$ is min-unbounded, then $\fu(X\setminus F)\in\mathscr F$ for any finite $F$; in particular, if $x\in\fu(X)$, say that $x=\bigcup_{i=1}^n x_n$ for $x_n\in X$, then $\fu(X\setminus\{x_1,\ldots,x_n\})\in\mathscr F\subseteq w$. Since any $y\in\fu(X\setminus\{x_1,\ldots,x_n\})$ is such that $x\cup y\in\fu(X)$, we conclude that $-x+\fu(X)\supseteq\fu(X\setminus\{x_1,\ldots,x_n\})\in w$ for all $x\in\fu(X)\in\mathscr S\subseteq v$. Therefore
\begin{equation*}
\{x\in\fin\big|-x+\fu(X)\in w\}\supseteq\fu(X)\in v,
\end{equation*}
showing that $\fu(X)\in v+w$. The proof is finished.
\end{proof}

By looking at the definition of ultrafilter addition, one can see that, if $u$ is idempotent and $A\in u$, then $A^\star=\{x\in A\big|-x+A\in u\}\in u$. Furthermore, for each $x\in A^\star$, it is the case that $-x+A^\star\in u$; in other words, $(A^\star)^\star=A^\star$~\cite[Lemma 4.14]{hindmanstrauss}. We are now ready to state the general result about union ultrafilters on extensions that have cofinally many Cohen reals; this result is reminiscent of analogous results of Roitman~\cite[Theorems 1 and 9]{roitman-cofinallycohen} regarding P-points and selective ultrafilters.

\begin{theorem}\label{shortfs}
Let $\langle\mathbb P_\alpha,\mathring{\mathbb Q_\alpha}\big|\alpha\leq\kappa\rangle$, where $\kappa$ has uncountable cofinality, be a forcing iteration that either:
\begin{enumerate}
\item has finite supports, or
\item satisfies that, for cofinally many $\alpha<\kappa$, $\mathbb P_\alpha\vDash``\mathring{\mathbb Q_\alpha}\text{ adds a Cohen real}"$.
\end{enumerate}
Then, if $G$ is a $\mathbb P_\kappa$-generic filter, we have that $\mathbf{V}[G]\vDash\text{There exists a stable ordered}\newline\text{ union ultrafilter}$.
\end{theorem}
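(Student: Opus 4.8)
\emph{Proof plan.} The strategy is to construct the ultrafilter in $\mathbf{V}[G]$ as an increasing union $u=\bigcup_{\alpha<\kappa}\mathscr{F}_\alpha$ of min-unbounded filters, where each $\mathscr{F}_\alpha$ lies in $\mathbf{V}[G_\alpha]$ and carries an $\fu$-subbasis whose members are $\fu$-sets of \emph{block sequences}; the chain will be built by recursion along the iteration so that $u$ is forced to decide every subset of $\fin$, to contain an $\fu$-set of a block sequence below each of its members, and to be stable. Lemma~\ref{extendtoidempotent} enters the recursion essentially: at each stage one fixes, in $\mathbf{V}[G_\alpha]$, an idempotent $w_\alpha\in\gamma\fin$ extending $\mathscr{F}_\alpha$, and the star-sets $A^\star$ (with $(A^\star)^\star=A^\star$ and $-x+A^\star\in w_\alpha$ for $x\in A^\star$) are precisely what makes the finite-union/Galvin--Glazer combinatorics run.

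First I would reduce both cases to a common situation: in case (2) the hypothesis directly gives a cofinal $C\subseteq\kappa$ and, for $\alpha\in C$, a name for a real $c_\alpha$ Cohen over $\mathbf{V}[G_\alpha]$, and in case (1) the same follows from the standard analysis of finite support iterations (which add Cohen reals at limit stages of countable cofinality). I also use the standard fact that, under the stated hypotheses, every subset of $\fin$ in $\mathbf{V}[G]$ — and every countable family of such sets — already belongs to some $\mathbf{V}[G_\beta]$ with $\beta<\kappa$. Then fix a bookkeeping that assigns to each $\alpha\in C$ some tasks, each of the form ``decide $A$'' (for $A\subseteq\fin$ appearing at a stage $<\alpha$) or ``stabilize $\langle Y_m\mid m<\omega\rangle$'' (for a sequence of block sequences with each $\fu(Y_m)$ appearing at a stage $<\alpha$), so that every task is eventually attended to.

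Now the recursion. Set $\mathscr{F}_0$ to be the min-unbounded filter, which does have an $\fu$-subbasis since $\{x\mid\min(x)>n\}=\fu(\{\{m\}\mid m>n\})$. At $\alpha+1$ with $\alpha\notin C$, and at limits, put $\mathscr{F}_{\alpha+1}:=\mathscr{F}_\alpha$, resp. take the union. At $\alpha\in C$, fix an idempotent $w_\alpha\supseteq\mathscr{F}_\alpha$ in $\mathbf{V}[G_\alpha]$ and attend to the assigned tasks using $c_\alpha$. For ``decide $A$'', replace $A$ by its complement if necessary so that $A\in w_\alpha$, and consider the poset $\mathbb{D}_A\in\mathbf{V}[G_\alpha]$ of finite block sequences $s$ with $\fu(s)\subseteq A^\star$, ordered by end-extension; the star identities show $\mathbb{D}_A$ is atomless, hence forcing-equivalent in $\mathbf{V}[G_\alpha]$ to Cohen forcing, so a $\mathbb{D}_A$-generic block sequence $Z_A$ over $\mathbf{V}[G_\alpha]$ can be read off from $c_\alpha$, giving $\fu(Z_A)\subseteq A^\star\subseteq A$. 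A density computation then shows that, generically, the blocks of $Z_A$ fall infinitely often into every finite intersection of members of the $\fu$-subbasis of $\mathscr{F}_\alpha$, so that $\fu(Z_A)$ is compatible with $\mathscr{F}_\alpha$ and adjoining it to the subbasis still yields a proper min-unbounded filter with an $\fu$-subbasis of block sequences. For ``stabilize $\langle Y_m\rangle$'' use instead the (again atomless) poset of finite block sequences $\langle s_0,\dots,s_{k-1}\rangle$ with $s_i\in\bigcap_{j\le i}\fu(Y_j)$; the resulting Cohen-generic $X\in\mathbf{V}[G_{\alpha+1}]$ satisfies $\fu(X\setminus\{s_0,\dots,s_{n-1}\})\subseteq\fu(Y_n)$ for every $n$. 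Adjoin the new $\fu(Z_A)$'s to the subbasis to obtain $\mathscr{F}_{\alpha+1}$; the stability witnesses $X$ need only exist in $\mathbf{V}[G]$, so they are not adjoined.

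Finally, $u:=\bigcup_{\alpha<\kappa}\mathscr{F}_\alpha\in\mathbf{V}[G]$ is min-unbounded with an $\fu$-subbasis of block sequences; since every $A\subseteq\fin$ in $\mathbf{V}[G]$ is attended to at some stage, $u$ decides it, so $u$ is an ultrafilter, and on the side of $A$ placed into $u$ we explicitly put an $\fu$-set $\fu(Z)\subseteq A$ with $Z$ a block sequence, whence $u$ is an \emph{ordered} union ultrafilter; stability follows because any countable $\{A_n\}\subseteq u$ reduces, through the $\fu$-subbasis, to a countable family of block sequences appearing at a bounded stage, which was stabilized. I expect the technical heart — and the step most likely to cause trouble — to be the density computation: one must verify that the block sequences extracted from $c_\alpha$ stay compatible with \emph{all} of $\mathscr{F}_\alpha$ even after its subbasis has become uncountable. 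This is exactly where a recursion carried out inside $\mathbf{V}[G_\alpha]$ would break down (there is no common pseudo-intersection of an uncountable tower of $\fu$-sets), and where one genuinely needs that a Cohen real over $\mathbf{V}[G_\alpha]$ meets every dense subset of $\mathbb{D}_A$ belonging to $\mathbf{V}[G_\alpha]$, the relevant densities being supplied by the star-set machinery of Lemma~\ref{extendtoidempotent}. A secondary point requiring care is the bookkeeping and cardinal accounting guaranteeing that all subsets of $\fin$ in $\mathbf{V}[G]$ get attended to.
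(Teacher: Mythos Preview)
Your overall architecture matches the paper's --- build an increasing chain $\mathscr{F}_\alpha\in\mathbf{V}[G_\alpha]$ of filters with $\fu$-subbases of block sequences, extending to an idempotent $w_\alpha$ at each stage and using the Cohen real $c_\alpha$ together with the Galvin--Glazer star machinery to manufacture new block sequences. But there is a genuine gap in the successor step, and it is precisely the point you flag as ``most likely to cause trouble'', though not quite for the reason you anticipate.

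Your density computation correctly shows that each individual $Z_A$ is compatible with the \emph{old} filter $\mathscr{F}_\alpha$. What it does not show is that the various $Z_A$'s produced at the \emph{same} stage are compatible with \emph{each other}. You read off $Z_A$ as a generic for $\mathbb{D}_A$ and $Z_B$ as a generic for $\mathbb{D}_B$ from the same Cohen real $c_\alpha$, but these are separate posets and the two generics have no reason to intersect; ``Cohen-equivalent'' is not canonical enough to force any relationship between them. And you cannot sidestep this by bookkeeping one task per stage: nothing in the hypotheses prevents $\kappa<\mathfrak c^{\mathbf{V}[G]}$ (e.g.\ an $\omega_1$-length iteration over a ground model with large continuum), so there may be far more subsets of $\fin$ to decide than there are stages in $C$. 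Nor can you process the tasks sequentially within one stage, updating the filter as you go, since after adjoining $\fu(Z_{A_1})$ the new filter lives in $\mathbf{V}[G_{\alpha+1}]$ and $c_\alpha$ is no longer generic over it.

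The paper resolves this not with bookkeeping but by treating \emph{all} $A\in w_\alpha$ at once via a single explicit formula: $x_n^A$ is the \emph{least} element $x$ with $c_\alpha(x)=1$ lying in the appropriate intersection of star-sets. Because every such selection set belongs to $w_\alpha$, any finite intersection of them (across several $A$'s and several stages $n$) is still $w_\alpha$-large, hence min-unbounded, hence hit cofinally by $c_\alpha^{-1}\{1\}$; this yields directly that the whole family $\{\ran(\vec{x}^A):A\in w_\alpha\}$ (together with the analogous diagonal sequences $\vec{x}^{\mathscr A}$ for countable $\mathscr A\subseteq w_\alpha$) is centred. The filter $\mathscr{F}_{\alpha+1}$ is then generated by the corresponding $\fu$-sets, and the stability witnesses are adjoined rather than left on the side. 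This uniform ``least element marked by $c_\alpha$'' definition is the missing idea.
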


\begin{proof}
By merging together iterands along a cofinal sequence if necessary, we may assume in fact that $\kappa$ is an uncountable regular cardinal, and furthermore that each $\mathring{\mathbb Q_\alpha}$ adds at least one Cohen real (in the case of finite support iterations, we just need to do the merging in such a way that each of the new $\mathring{\mathbb Q_\alpha}$ includes a limit stage of the original iteration). Given a $\mathbb P_\kappa$-generic filter $G$, we let $G_\alpha=G\cap\mathbb P_\alpha$ and $V_\alpha=\mathbf{V}[G_\alpha]$. Let $\langle c_\alpha\big|\alpha<\kappa\rangle$ be a sequence of Cohen reals added by the iterands, in other words, for every $\alpha<\kappa$, we will have that $c_\alpha\in V_{\alpha+1}\setminus V_\alpha$ is Cohen over $V_\alpha$. We recursively define an increasing sequence of filters $\mathscr F_\alpha$ on $\fin$ satisfying the following five conditions for every $\alpha\leq\kappa$:
\begin{enumerate}
\item $\mathscr F_\alpha\in V_\alpha$,
\item $\mathscr F_\alpha$ has a subbasis of sets of the form $\fu(X)$, for $X$ an infinite block sequence,
\item $\alpha=\bigcup\alpha\Rightarrow\mathscr F_\alpha=\bigcup_{\xi<\alpha}\mathscr F_\xi$,
\item for every $A\in V_\alpha\cap\wp(\fin)$, there exists a block sequence $X$ such that either $\fu(X)\subseteq A$ or $\fu(X)\subseteq\fin\setminus A$ and $\fu(X)\in\mathscr F_{\alpha+1}$,
\item for every countable sequence $\{A_n\big|n<\omega\}\subseteq\mathscr F_\alpha$, there exists a block sequence $X$ such that $(\forall n<\omega)(X\subseteq^* A_n)$ and with $\fu(X)\in\mathscr F_{\alpha+1}$.
\end{enumerate}
If we are successful in choosing such $\mathscr F_\alpha$, in the end we will obtain the filter $u=\mathscr F_\kappa=\bigcup_{\alpha<\kappa}\mathscr F_\alpha\in V_\kappa=\mathbf{V}[G]$. Notice that, since $\kappa$ has uncountable cofinality, every $A\in\wp(\fin)$, as well as every countable sequence $\{A_n\big|n<\omega\}$, that belong to $\mathbf{V}[G]$, will already belong to some $V_\alpha$ for $\alpha<\kappa$; this will imply that $u$ is an ordered union ultrafilter by requirement (4), and moreover $u$ will be stable by requirement (5).

Hence we now focus on building the $\mathscr F_\alpha$ satisfying all four requirements. Abiding by requirement (3) means that we will not worry at limit stages, so let us focus on the case where we know $\mathscr F_\alpha$, and we attempt to define $\mathscr F_{\alpha+1}$. Let us start working within $V_\alpha$. By the inductive requirement (2) on $\mathscr F_\alpha$, along with Lemma~\ref{extendtoidempotent}, there exists an idempotent ultrafilter $u_\alpha\in V_\alpha$ extending $\mathscr F_\alpha$. Let us now move to $V_{\alpha+1}=\mathbf{V}[G_{\alpha+1}]$, and recall that $c_\alpha\in V_{\alpha+1}$ is a real that is Cohen over $V_\alpha$; let us think of $c_\alpha$ as a function $c_\alpha:\fin\longrightarrow 2$. For the purpose of taking minima in $\fin$, well-order it (in order-type $\omega$) by declaring that $x<y$ iff $\max(x\bigtriangleup y)\in y$. For each $A\in u_\alpha$ we define a block sequence $\vec{x^A}=\langle x_n^A\big|n<\omega\rangle$ such that $\fu(\ran(\vec{x^A}))\subseteq A$, by means of the following recursion:
\begin{equation*}
x_0^A=\min\{x\in\fin\big|c_\alpha(x)=1\wedge x\in A^\star\},
\end{equation*}
and
\begin{eqnarray*}
x_{n+1}^A & = & \min\left\{x\in\fin\bigg|\max(x_n^A)<\min(x)\wedge\right. \\
 & & \left.c_\alpha(x)=1\wedge x\in A^\star\cap\left(\bigcap_{y\in\fu(\{x_0^A,\ldots,x_n^A\})}(-y+A)^\star\right)\right\}.
\end{eqnarray*}
The usual Galvin--Glazer argument for proving Hindman's theorem using idempotent ultrafilters can be used here to see that, as long as each $x_n^A$ is defined, it will be indeed the case that $\fu(\{x_n^A\big|n<\omega\})\subseteq A$. Using the idempotence of $u_\alpha$ and the fact that $A\in u_\alpha$, it is easy to show by induction on $n<\omega$ that the set $A^\star\cap\left(\bigcap_{y\in\fu(\{x_0^A,\ldots,x_n^A\})}(-y+A^\star)\right)\in u_\alpha$, and in particular it is infinite and min-unbounded. Thus the set of all $f:\fin\longrightarrow 2$ such that $f^{-1}[\{1\}]$ intersects that set in a min-unbounded set is comeagre in $2^\fin$, and therefore, since $c_\alpha$ is Cohen over $V_\alpha$, the above recursion really defines an infinite block sequence $\vec{x^A}$. Furthermore, for each countable sequence $\mathscr A=\{A_n\big|n<\omega\}\subseteq u_\alpha$ such that $\mathscr A\in V_\alpha$, recursively define the block sequence $\vec{x^{\mathscr A}}=\langle x_n^{\mathscr A}\big|n<\omega\rangle$ by:
\begin{equation*}
x_0^{\mathscr A}=\min\{x\in\fin\big|c_\alpha(x)=1\wedge x\in A_0^\star\},
\end{equation*}
and
\begin{eqnarray*}
x_{n+1}^{\mathscr A} & = & \min\left\{x\in\fin\bigg|\max(x_n^{\mathscr A})<\min(x)\wedge\right. \\
 & & \left. c_\alpha(x)=1\wedge x\in\bigcap_{k\leq n}\left(A_k^\star\cap\left(\bigcap_{y\in\fu(\{x_k^A,\ldots,x_n^A\})}(-y+A_k)^\star\right)\right)\right\}.
\end{eqnarray*}
Notice that, in case the sequence $\langle x_n^{\mathscr A}\big|n<\omega\rangle$ is infinite, then (again by the usual Galvin--Glazer argument, since after stage $n$ we start taking care of $A_n$) it will be the case that $(\forall n<\omega)(\ran(\vec{x^{\mathscr A}})\subseteq^* A_n)$ and, in fact, for every $n<\omega$ we will have that $\fu(\{x_k^{\mathscr A}\big|k>n\})\subseteq A_n$. For each $n<\omega$ we have\newline $\bigcap_{k\leq n}\left(A_k^\star\cap\left(\bigcap_{y\in\fu(\{x_k^A,\ldots,x_n^A\})}(-y+A_k)^\star\right)\right)\in u_\alpha$, so in particular this set is infinite and min-unbounded, and therefore the set of all $f:\fin\longrightarrow 2$ such that this set intersects $f^{-1}[\{1\}]$ in a min-unbounded set is comeagre in $2^\fin$. Hence, since $c_\alpha$ is Cohen over $V_\alpha$, we can conclude that the above definition yields an infinite block sequence $\langle x_n^{\mathscr A}\big|n<\omega\rangle$.

Let us now show that the family
\begin{equation*}
\mathscr B_\alpha=\{\ran(\vec{x^A})\big|A\in u_\alpha\}\cup\{\ran(\vec{x^{\mathscr A}})\big|\mathscr A=\{A_n\big|n<\omega\}\subseteq u_\alpha\wedge\mathscr A\in V_\alpha\}
\end{equation*}
is centred. In fact, if $A_1,\ldots,A_n\in u_\alpha$ and $\mathscr A_1,\ldots,\mathscr A_m\in[u_\alpha]^{<\omega}\cap V_\alpha$, notice that, for every choice of $k_1,\ldots,k_n,l_1,\ldots,l_m<\omega$, the set
\begin{eqnarray*}
 & & \left(\bigcap_{i=1}^n\left(A_i^\star\cap\left(\bigcap_{y\in\fu(\{x_0^{A_i},\ldots,x_{k_i-1}^{A_i}\})}(-y+A_i^\star)\right)\right)\right) \\  & & \cap\left(\bigcap_{i=1}^m\left(\bigcap_{k=1}^{l_i-1}\left((A_j^i)^\star\cap\left(\bigcap_{y\in\fu(\{x_k^{A_j^i},\ldots,x_{l_i-1}^{A_j^i}\})}(-y+A_j^i)^\star\right)\right)\right)\right)
\end{eqnarray*}
(where $\mathscr A_i=\{A_j^i\big|j<\omega\}$) belongs to $u_\alpha$ and hence it is min-unbounded, and therefore the set of all $f:\fin\longrightarrow 2$ such that this set intersects $f^{-1}[\{1\}]$ in a min-unbounded set is comeagre in $2^\fin$. Thus, since $c_\alpha$ is Cohen over $V_\alpha$, we conclude that
\begin{equation*}
\left(\bigcap_{i=1}^n\{x_j^{A_i}\big|j<\omega\}\right)\cap\left(\bigcap_{i=1}^m\{x_j^{\mathscr A_i}\big|j<\omega\}\right)
\end{equation*}
is nonempty, and in fact, it is infinite and min-unbounded (by Cohen-genericity of $c_\alpha$ over $V_\alpha$).

Therefore $\mathscr B_\alpha$ is a centred family; this immediately implies (since $X\subseteq\fu(X)$ for every block sequence $X$) that the family $\mathscr S_\alpha=\{\fu(X)\big|X\in\mathscr B_\alpha\}$ is centred as well; we let $\mathscr F_{\alpha+1}$ be the filter generated by $\mathscr S_\alpha$ (that is, $\mathscr F_{\alpha+1}$ has $\mathscr S_\alpha$ as a subbasis). Hence the inductive requirement (2) holds of $\mathscr F_{\alpha+1}$ (and so does inductive requirement (1), since the whole construction was carried out in $V_{\alpha+1}$). For every $\mathscr A=\{A_n\big|n<\omega\}\subseteq\mathscr F_\alpha\subseteq u_\alpha$, we know that $\fu(\ran(\vec{x^{\mathscr A}}))\in\mathscr F_{\alpha+1}$, which takes care of requirement (5) for $\mathscr F_{\alpha+1}$. Furthermore, if $A\in V_\alpha\cap\wp(\fin)$, then either $A$ or $\fin\setminus A$ belongs to $u_\alpha$; in either case, the presence of $\fu(\ran(\vec{x^A}))$ (respectively $\fu(\ran(\vec{x}^{\fin\setminus A}))$) in $\mathscr F_{\alpha+1}$ verifies requirement (4). We have shown that our filter $\mathscr F_{\alpha+1}$ still satisfies all five inductive requirements (requirement (3) is vacuously true at this stage). Hence the induction can continue, and we are done.
\end{proof}

\begin{corollary}\label{fsmodel}
Let $\mathbf{V}$ be a model of $\zfc$ satisfying $\mathbf{V}\vDash\cf(\kappa)=\kappa<\lambda=\mathfrak c$. Suppose we perform a finite support iteration of c.c.c. forcings of size $\leq\!\!\lambda$. If the iteration adds a dominating real cofinally often, then in the generic extension it will be the case that $\covm=\kappa<\lambda=\mathfrak c$ and there are stable ordered union ultrafilters.
\end{corollary}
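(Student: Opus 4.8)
The plan is to combine Theorem~\ref{shortfs}(1) with the Cichoń-diagram inequality $\covm\leq\mathfrak d$ and two standard facts about finite-support iterations. Write $\langle\mathbb P_\alpha,\mathring{\mathbb Q_\alpha}\big|\alpha\leq\kappa\rangle$ for the iteration (of length $\kappa$), let $G$ be $\mathbb P_\kappa$-generic, and put $G_\alpha=G\cap\mathbb P_\alpha$, $V_\alpha=\mathbf V[G_\alpha]$. First I would dispatch the routine bookkeeping: by the Solovay--Tennenbaum theorem $\mathbb P_\kappa$ is ccc, and a straightforward induction (counting nice names, using that antichains are countable and that $\mathbf V\vDash\lambda^{\aleph_0}=\lambda$, which holds since $\lambda=\mathfrak c$) gives $|\mathbb P_\kappa|\leq\lambda$. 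Hence $\mathbb P_\kappa$ preserves cardinals and cofinalities, so $\kappa$ remains regular with $\kappa<\lambda$, and comparing the $\lambda$ many ground-model reals with the at most $|\mathbb P_\kappa|^{\aleph_0}=\lambda$ nice names for reals yields $\mathbf V[G]\vDash\mathfrak c=\lambda$. As usual for ccc iterations of length of uncountable cofinality, every real of $\mathbf V[G]$ lies in some $V_\alpha$ with $\alpha<\kappa$.

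Since the iteration has finite supports, the existence of a stable ordered union ultrafilter in $\mathbf V[G]$ is immediate from Theorem~\ref{shortfs}(1); note that the assumption that a dominating real is added cofinally often guarantees in particular that the iterands are nontrivial cofinally often, which is what the merging step in that theorem's proof requires.

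For $\covm$ I would prove the two inequalities separately. \emph{Upper bound:} fix a cofinal $C\subseteq\kappa$ such that for each $\alpha\in C$ a real $d_\alpha\in V_{\alpha+1}$ dominating $\omega^\omega\cap V_\alpha$ is added. Any $f\in\omega^\omega\cap\mathbf V[G]$ lies in some $V_\beta$ with $\beta<\kappa$, and is then dominated by $d_\alpha$ for any $\alpha\in C$ above $\beta$; so $\{d_\alpha\big|\alpha\in C\}$ witnesses $\mathfrak d\leq|C|=\kappa$, whence $\covm\leq\mathfrak d\leq\kappa$. \emph{Lower bound:} here I would invoke the standard fact that a finite-support iteration with cofinally many nontrivial iterands adds, above every stage, a Cohen real over the corresponding intermediate model --- so for each $\alpha<\kappa$ there is $\delta$ with $\alpha<\delta<\kappa$ such that $V_\delta$ contains a real Cohen over $V_\alpha$. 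If $\{M_\xi\big|\xi<\mu\}$ were a meagre cover of $\mathbb R$ with $\mu<\kappa$, all the reals coding the $M_\xi$ would lie in a single $V_\alpha$ with $\alpha<\kappa$ (by regularity of $\kappa$); but a real Cohen over $V_\alpha$ avoids every meagre set coded in $V_\alpha$, a contradiction. Hence $\covm\geq\kappa$, and together with the upper bound $\mathbf V[G]\vDash\covm=\kappa<\lambda=\mathfrak c$.

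The only genuinely delicate point is the lower-bound step: one must check that ``a dominating real is added cofinally often'' really delivers the folklore input that finite-support iterations add Cohen reals cofinally (this is also what is tacitly needed for Theorem~\ref{shortfs} to apply), which is why it is the cofinal \emph{nontriviality} of the iterands, rather than domination as such, that does the work there. Everything else is either an appeal to Theorem~\ref{shortfs}(1) or elementary cardinal arithmetic.
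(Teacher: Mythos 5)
Your proposal is correct and follows essentially the same route as the paper's proof: Theorem~\ref{shortfs} for the ultrafilter, nice-name counting for $\mathfrak c=\lambda$, the Cohen reals appearing at limit stages for $\covm\geq\kappa$, and the chain $\covm\leq\mathfrak d\leq\kappa$ via the cofinally added dominating reals for the upper bound. Your extra remark that it is the cofinal nontriviality of the iterands (guaranteed here by the dominating reals) that yields the Cohen reals is a fair point of care, but it matches what the paper does implicitly.
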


\begin{proof}
Suppose that our iteration is $\langle\mathbb P_\alpha,\mathring{\mathbb Q_\alpha}\big|\alpha<\kappa\rangle$. One of our assumptions is that $\mathbb P_\alpha\Vdash``|\mathring{Q_\alpha}|\leq\check{\lambda}"$, so we may assume that, in fact,
\begin{equation*}
\mathbb P_\alpha\Vdash``\text{The underlying set of }\mathring{Q_\alpha}\text{ is a subset of }\check{\lambda}".
\end{equation*}
Since $\lambda^\omega=(2^\omega)^\omega=2^\omega=\lambda$ (in $\mathbf{V}$) and each of the iterands is c.c.c., a straightforward computation on the number of (nice) names for reals and (nice) names for elements of $\lambda$ allows us to show, by induction on $\alpha$, that each $\mathbb P_\alpha$ has a dense subset of size $\lambda$ and that $\mathbb P_\alpha\Vdash``\mathfrak c=\check{\lambda}"$. Now, notice that
\begin{equation*}
\mathbf{V}[G]\vDash(\exists u)(u\text{ is a stable ordered union ultrafilter}),
\end{equation*}
by Theorem~\ref{shortfs}. On the other hand, we claim that
\begin{equation*}
\mathbf{V}[G]\vDash\covm=\kappa.
\end{equation*}
To see this, notice that, on the one hand, a Cohen real is added at each limit stage of the iteration. A real is Cohen over $\mathbf{V}[G_\alpha]$ if and only if it does not belong to any Borel meagre set coded in $\mathbf{V}[G_\alpha]$, where $G_\alpha=G\cap\mathbb P_\alpha$. Since any family of $<\!\!\kappa$ Borel meagre subsets of $\mathbb R$ must be completely contained within some stage $\alpha<\kappa$ of the iteration, and some Cohen real gets added by this iteration after stage $\alpha$, such a family cannot cover $\mathbb R$. Hence $\covm\geq\kappa$. On the other hand, by merging iterands if necessary, we may assume that $\mathbb P_\alpha\Vdash``\mathring{\mathbb Q_\alpha}\text{ adds a dominating real}"$ for all $\alpha<\kappa$; let $x_\alpha$ be a dominating real added by $\mathring{\mathbb Q_\alpha}$. Then the family $\{x_\alpha\big|\alpha<\kappa\}\in\mathbf{V}[G]$ will be a dominating family, showing that $\mathfrak d\leq\kappa$ in $\mathbf{V}[G]$. Since (it is provable in $\zfc$ that) $\covm\leq\mathfrak d$, it follows that $\covm\leq\kappa$ in $\mathbf{V}[G]$. This shows that $\covm=\kappa<\lambda=\mathfrak c$ in $\mathbf{V}[G]$. With this, the proof is complete.
\end{proof}

\section{Countable support iterations}

In this section we show that it is also possible to obtain models of $\zfc$, with stable ordered union ultrafilters and $\covm<\mathfrak c$, using a countable support iteration of proper forcing notions. The main forcing notion for our construction is the following.

\begin{definition}
 Given an ultrafilter $u$ on $\fin$, we define the {\it ultraLaver forcing on $u$} to be the partially 
 ordered set $\mathbb L(u)$ whose elements are subtrees $p$ of the full tree $\fin^{<\omega}$ (that is, we require that $p$ is closed under initial segments) with a stem $s_p\in p$ satisfying that every node $t\in p$ is comparable with $s_p$, such that ``the branching of $p$ is in $u$ above $s_p$'', meaning that for every $t\in p$ such that $t\geq s_p$, the set of immediate successors $\suc_p(t)=\{x\in\fin\big|t\frown\langle x\rangle\in p\}\in u$. The ordering is given by $p\leq q$ iff $p\subseteq q$.
\end{definition}

Normally this forcing notion is defined on $\omega$ rather than $\fin$, and with a different ultrafilter for each node, but the definition as stated above is all we need for our purposes. The following are well-known properties of ultraLaver forcing (see for example \cite[Section 1A]{borelanddualborel}), and are also not terribly difficult to prove.

\begin{itemize}
 \item $\mathbb L(u)$ is $\sigma$-centred (hence c.c.c. and proper).
 \item $\mathbb L(u)$ has the {\it pure decision property}: given any statement $\varphi$ 
 in the forcing language and any condition $p\in\mathbb L(u)$, it is possible to find a 
 \textit{pure extension} $p'\leq^0 p$ (that is, $p'\leq p$ and $s_{p'}=s_p$) deciding 
 $\varphi$ (i.e., either $p'\Vdash\varphi$ or $p'\Vdash\neg\varphi$).
 \item As a direct consequence of the previous point, whenever $F$ is a (ground-model) finite set, and 
 $\mathring{x}$ is an $\mathbb L(u)$-name such that some condition $p$ forces $p\Vdash``\mathring{x}\in\check{F}"$, there is a pure extension $p'\leq^0 p$ and an element $y\in F$ such that $p'\Vdash``\mathring{x}=\check{y}"$.
\end{itemize}

Note that, at this point, we are still not assuming any special property of $u$ other than its being an ultrafilter. The following lemma shows that the situation becomes quite interesting when $u\in\gamma\fin$ is an idempotent ultrafilter. In what follows, we will denote by $\mathring{X}$ the $\mathbb L(u)$-name for the generic subset that arises from the generic filter (which is the union of all the stems of, or equivalently the intersection of all conditions in, the generic filter); consequently, the generic extension will be denoted by $\mathbf{V}[X]$. It is easy to see that, if the ultrafilter $u$ is min-unbounded, then every condition $p\in\mathbb L(u)$ will force $p\Vdash``\mathring{X}\setminus\ran(\check{s_p})\text{ is a block sequence}"$.

\begin{lemma}\label{laverdiagonalizes}
Let $u\in\gamma\fin$ be an idempotent ultrafilter. In the generic extension $\mathbf{V}[X]$ obtained by forcing with $\mathbb L(u)$, for every (ground model) set $A\in u$ there is a finite set $F$ such that $\fu(X\setminus F)\subseteq A$.
\end{lemma}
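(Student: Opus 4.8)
The plan is to exploit the idempotence of $u$ together with the Galvin--Glazer combinatorics, transported into the forcing via the pure decision property. Fix $A\in u$; since $u$ is idempotent we may replace $A$ by $A^\star$, so without loss of generality $A=A^\star$, meaning that for every $x\in A$ we have $-x+A\in u$ (and moreover $(-x+A)^\star=-x+A$). The key observation is that such an $A$ is ``closed under adjoining blocks'': whenever $x\in A$ and $y\in A$ with $\max(x)<\min(y)$ and $y\in -x+A$, then $x\cup y\in A$. So the combinatorial core of the argument is the following claim about a single condition: if $p\in\mathbb L(u)$ is any condition whose stem $s_p$ has the property that $\ran(s_p)\subseteq A$ and such that the branching of $p$ lies inside $A$ everywhere above the stem (in fact inside the appropriate shifted copies of $A$), then $p$ already forces $\fu(\mathring X\setminus\ran(\check s_p))\subseteq\check A$.

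First I would make that claim precise and prove it by a density/pruning argument entirely in the ground model. Given an arbitrary condition $q\in\mathbb L(u)$, I want to find $p\leq q$ that is ``$A$-good'' in the above sense. One prunes $q$ from the stem up: at each node $t\geq s_q$ one intersects $\suc_q(t)$ with $A^\star$ and with $\bigcap_{y\in\fu(Z_t)}(-y+A)^\star$, where $Z_t$ is the (finite) block sequence consisting of the successor-steps taken along $t$ strictly above $s_q$; each of these finitely many sets is in $u$ because $u$ is idempotent and $A=A^\star$, so the intersection is still in $u$ and the pruned tree $p$ is a legitimate condition below $q$. By the standard Galvin--Glazer computation, every branch $b$ through such a $p$ has the property that $\fu(\ran(b)\setminus\ran(s_q))\subseteq A$: indeed any finite union of distinct block-entries appearing along $b$ above the stem is seen, by induction on the number of entries, to lie in $A$, precisely because each new entry was chosen inside $-y+A$ for all earlier partial unions $y$. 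Since $\mathring X$ is the branch determined by the generic, $p$ forces $\fu(\mathring X\setminus\ran(\check s_q))\subseteq\check A$.

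The remaining step is to pass from this density statement to the statement of the lemma, and here is where the finite exceptional set $F$ enters. A general condition $q$ need not have its stem inside $A$, nor need its branching lie in $A$; but the pruning above only requires control \emph{above} the stem. So given arbitrary $q$, I intersect $\suc_q(t)$ for $t\geq s_q$ with $A^\star$ (and the shifted copies, with $Z_t$ now counting only steps strictly above $s_q$) to get $p\leq^0 q$ a pure extension; then $p$ forces that $\fu$ of the part of $\mathring X$ strictly above the stem is contained in $A$, i.e. $p\Vdash\fu(\mathring X\setminus\ran(\check s_p))\subseteq\check A$, so $p$ forces the conclusion with $F=\ran(s_p)$. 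Since the set of conditions forcing ``$\exists$ finite $F$ with $\fu(\mathring X\setminus F)\subseteq\check A$'' is dense (indeed every condition has such a pure extension), the generic filter meets it and the conclusion holds in $\mathbf V[X]$.

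I expect the main obstacle to be bookkeeping rather than conceptual: one must be careful that when pruning a node $t$ the finite block sequence $Z_t$ whose partial unions $y$ define the constraints $-y+A$ really does consist only of entries chosen strictly between $s_p$ and $t$, so that $Z_t$ is genuinely a block sequence (entries pairwise disjoint with increasing blocks) and the sets $(-y+A)^\star$ make sense and lie in $u$. The verification that $\bigcap_{y\in\fu(Z_t)}(-y+A)^\star\cap A^\star\in u$ uses only that $u$ is idempotent together with the identity $(A^\star)^\star=A^\star$ recalled just before the statement, applied finitely many times; and the verification that every branch yields $\fu$ inside $A$ is exactly the Galvin--Glazer induction already invoked in the proof of Theorem~\ref{shortfs}. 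No appeal to any special property of $u$ beyond idempotence (and min-unboundedness, to know the generic is a block sequence) is needed.
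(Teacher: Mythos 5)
Your proposal is correct and follows essentially the same route as the paper: prune an arbitrary condition above its stem by intersecting each successor set with $A^\star$ and the finitely many sets $(-y+A)^\star$ for $y$ ranging over the finite unions of the entries already chosen along that node (all in $u$ by idempotence and $(A^\star)^\star=A^\star$), observe via the Galvin--Glazer induction that every branch of the pruned condition has all its above-stem finite unions inside $A$, and conclude by density with $F=\ran(s_p)$. The only cosmetic difference is your passing remark about the pure decision property, which neither you nor the paper actually needs here.
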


\begin{proof}
It suffices to prove that every condition $p$ can be extended to a condition $q$ that forces $``\fu(\mathring{X}\setminus\ran(\check{s_q}))\subseteq\check{A}"$. In order to do that, we recursively define the levels $(q)_n$ of the condition $q\leq p$; in doing so, we will essentially reproduce the Galvin--Glazer argument for proving Hindman's theorem along the tree $p$. First we let $s_q=s_p$. Now suppose that we have defined the $n$-th level of $q$ above the stem, $(q)_{|s_p|+n}$; furthermore, assume that, for each $t=s_p\frown\langle x_0,\ldots,x_{n-1}\rangle\in (q)_{|s(p)|+n}$, it is the case that $\fu(\{x_0,\ldots,x_{n-1}\})\subseteq A^\star$. Then we will define $(q)_{|s_p|+n+1}$ by letting, for every $t=s_p\frown\langle x_0,\ldots,x_{n-1}\rangle\in (q)_{|s_p|+n}$, its set of successors be
\begin{equation*}
\suc_q(t)=\suc_p(t)\cap A^\star\cap\left(\inter_{x\in\fu(\{x_0,\ldots,x_{n-1}\})}-x+A^\star\right)
\end{equation*}
and notice that the inductive hypothesis still holds for all of the new nodes $t\frown\langle x\rangle\in(q)_{|s_p|+n+1}$, so that the construction can continue. Since we are basically just implementing the Galvin--Glazer argument along our tree, it is easy to see that for every branch $f$ of the tree $q$ it will be the case that $\fu(f[\{|s_p|+n\big|n<\omega\}])\subseteq A$. Note that this also implies that 
\begin{equation*}
q\Vdash``\fu(\mathring{X}\setminus\ran(\check{s_p}))\subseteq\check{A}"
\end{equation*}
(since for every finite subset $a\subseteq\omega$, there is an extension $r\leq q$ deciding 
that the generic set $\mathring{X}$ coincides with some ground-model branch $f$ of $q$ up to 
the $|s_q|+\max(a)$-th element), and we are done.
\end{proof}

We will now attempt to build an iteration of forcing notions of the form $\mathbb L(u)$, for various $u$, with countable support (if we do it with finite support, then we will be adding cofinally many Cohen reals, and so by the results in Section 2 we would already know that there will be union ultrafilters). Now, if we start with a model of $\ch$ and proceed to iterate proper forcing notions with countable support, the iteration must be of length (at least) $\omega_2$ if we want the generic extension to satisfy $\neg\ch$. Furthermore, we need to make sure that the iteration does not add any Cohen reals, lest we end up with a model where $\covm=\mathfrak c$, which is not what we want. We proceed to lay out some conditions under which forcing notions of the form $\mathbb L(u)$, and their iterations, do not add Cohen reals.

\begin{definition}
A forcing notion $\mathbb P$ is said to satisfy the {\it Laver property} if whenever $g:\omega\longrightarrow\omega$ (in the ground model), $p$ is a condition, and $\mathring{f}$ is a $\mathbb P$-name such that
\begin{equation*}
 p\Vdash``\mathring{f}:\omega\longrightarrow\omega\text{ and }\mathring{f}\leq\check{g}",
\end{equation*} 
there is $F:\omega\longrightarrow[\omega]^{<\omega}$ and $q\leq p$ such that for every $n<\omega$, 
 $|F(n)|\leq2^n$ and $q\Vdash``\mathring{f}(\check{n})\in\check{F}(\check{n})"$.
\end{definition}

The Laver property is important because of two reasons. The first is that it is preserved under countable support iterations~\cite[Theorem 6.3.34]{librogris}, and the second is that, whenever $\mathbb P$ has the Laver property, it does not add any Cohen reals~\cite[Lemma 7.2.3]{librogris} (or see~\cite[2.10D]{shelahproperimproper}). This means that, if we iterate proper forcing notions satisfying the Laver property with countable support, then the full iteration itself will not have added any Cohen reals. Therefore, if our ground model satisfies $\ch$, in the generic extension it will be the case that $\covm=\omega_1<\omega_2=\mathfrak c$. We do not know exactly how to characterize the ultrafilters $u$ such that $\mathbb L(u)$ has the Laver property. We will see, however, that the answer to this will be affirmative in case $u$ is a stable ordered union ultrafilter. To see this, it will be convenient to recall a characterization of stable ordered union ultrafilters in terms of games.

\begin{definition}
Given an ultrafilter $u\in\gamma\fin$, we define a game $\mathscr G(u)$ as follows: in the $n$-th inning, player I plays a set $A_n\in u$ and then player II responds with an element $x_n\in A_n$. After $\omega$ moves, we collect player II's moves into a family $X=\{x_n\big|n<\omega\}$, and player II wins if and only if $\fu(X)\in u$.
\end{definition}

The definition of the above game, as well as the characterization of stable ordered union ultrafilters in terms of it, is part of a still unpublished joint work of the author with Peter Krautzberger and David Chodounsk\'y. The reader interested in seeing a written proof of the two claims that follow can consult~\cite[Section 4.2, pp. 112--116]{myphdthesis}. The first claim is that, if $u$ is nonprincipal, then it is impossible for player II to have a winning strategy (this can be proved with the usual argument where players I and II play two games in parallel, in such a way that II cannot possibly win both games). The second claim is the following theorem.

\begin{theorem}[\cite{myphdthesis}, Theorem 4.17]\label{gamechar}
Let $u\in\gamma\fin$ be an idempotent ultrafilter. Then, $u$ is stable ordered union if and only if player I does not have a winning strategy in the game $\mathscr G(u)$.
\end{theorem}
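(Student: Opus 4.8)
The plan is to prove both directions by contraposition, so we must show: (i) if $u$ is \emph{not} stable ordered union (while still being an idempotent member of $\gamma\fin$), then player I has a winning strategy in $\mathscr G(u)$; and (ii) if player I has a winning strategy, then $u$ fails to be stable ordered union. I expect direction (ii) to be the easier half and direction (i) to be the main obstacle.

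For direction (ii), suppose $\sigma$ is a winning strategy for player I. One runs the strategy against an arbitrary play by II and extracts a countable family $\{A_n \mid n<\omega\}\subseteq u$ consisting of all the sets player I could be prompted to play along some run (or, more carefully, one builds a countable subfamily of $u$ that is ``cofinal for $\sigma$''); then one argues that no single block sequence $X$ with $\fu(X)\in u$ can diagonalize this family in the sense required by stability-plus-orderedness, since any such $X$ would, read off inning by inning, constitute a legal play by II that defeats $\sigma$. The bookkeeping here is that a winning strategy for I is a function of II's finitely many prior moves, so the tree of possible plays is finitely branching at I's moves only in a loose sense; the clean way is to show that if $u$ \emph{were} stable ordered union, then given $\sigma$ one could use Theorem~\ref{laverdiagonalizes}-style reasoning, or simply the defining property of being stable ordered union applied to an appropriate countable subfamily, to manufacture a winning counter-play, contradicting that $\sigma$ wins for I. (The statement in the paper that II never has a winning strategy is the symmetric companion fact and is cited as already known, so we need not reprove it.)

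For direction (i), assume $u$ is an idempotent ultrafilter in $\gamma\fin$ that is \emph{not} stable ordered union; I must produce a winning strategy for player I. The idea is to have I play the $\star$-iterates of previously-committed sets, exactly as in the Galvin--Glazer recursion that appears in the proofs of Lemma~\ref{extendtoidempotent} and Lemma~\ref{laverdiagonalizes}: after II has played $x_0,\dots,x_{n-1}$ (each chosen from the set I just offered), I responds at inning $n$ with
\begin{equation*}
A_n \;=\; A\cap\Bigl(\bigcap_{y\in\fu(\{x_0,\dots,x_{n-1}\})}(-y+A)\Bigr)^{\star}
\end{equation*}
for a suitably chosen seed $A\in u$ — but the point is that, because $u$ is idempotent, \emph{every} such play by I keeps II honest in the sense that $\fu(\{x_i\mid i<\omega\})\subseteq A$ whenever II stays legal, so II's resulting union set $\fu(X)$ is forced to be a \emph{subset} of a prescribed set. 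To defeat II we need to choose the seeds so that $\fu(X)\notin u$; this is precisely where the \emph{failure} of stable-orderedness is used. Concretely: since $u$ is not stable ordered union, by Eisworth-type analysis (or directly) there is a countable descending sequence, or a partition, witnessing that $u$ has no pseudo-intersecting block sequence in $u$; I enumerate the relevant witnessing sets and feed their $\star$-iterates to II, so that II is forced to walk into a set outside $u$. The delicate point — the main obstacle — is bridging the gap between the \emph{combinatorial} failure of ``stable ordered union'' and the \emph{game-theoretic} requirement that I's moves depend only on II's finite play so far; one must check that the witnessing countable family can be deployed adaptively, which is where the $\star$-operation's self-idempotence $(A^\star)^\star=A^\star$ (quoted just before Theorem~\ref{shortfs}) does the heavy lifting, guaranteeing that I can always legally continue while steering II.

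I would organize the write-up as: first recall the relevant $\star$-calculus facts for idempotent $u$; then prove (ii) in a short paragraph; then devote the bulk to (i), first stating precisely what ``not stable ordered union'' yields as a combinatorial object, then defining I's strategy by recursion on the innings using $\star$-iterates of that object, then verifying by the Galvin--Glazer argument that along any legal play the set $\fu(X)$ lands inside (a set chosen to be) the complement of some element of $u$, hence $\fu(X)\notin u$ and I wins. The one subtlety to flag in the text is that ``ordered'' (block sequences) rather than merely ``pairwise disjoint'' is what makes the min-unboundedness of $u$ interact correctly with the recursion, so the seeds should be chosen inside the min-unbounded filter to keep all the $\suc$-sets min-unbounded.
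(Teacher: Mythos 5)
First, a bookkeeping remark: the paper does not actually prove Theorem~\ref{gamechar}; it is quoted from the author's thesis, so there is no in-paper argument to measure yours against. Judged on its own terms, your direction (i) is essentially right: given a witness to the failure of being (stable) ordered union --- either a single $A\in u$ such that no block sequence $X$ has $\fu(X)\subseteq A$ and $\fu(X)\in u$, or a countable family $\{A_n\mid n<\omega\}\subseteq u$ admitting no pseudo-intersecting block sequence whose $\fu$-set lies in $u$ --- player I plays the Galvin--Glazer iterates $A^\star\cap\bigcap_{y\in\fu(\{x_0,\dots,x_{n-1}\})}(-y+A)^\star$, interleaving the sets $A_k$ for $k\leq n$ at inning $n$ in the stability case, exactly as in the $\vec{x^{\mathscr A}}$ construction inside the proof of Theorem~\ref{shortfs}. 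Idempotence and $(A^\star)^\star=A^\star$ keep every move in $u$, legality of II's moves forces $\fu(X)\subseteq A$ (respectively $\fu(X\setminus\{x_0,\dots,x_{n-1}\})\subseteq A_n$), and the choice of witness then yields $\fu(X)\notin u$, so I wins. That half is fine, and contrary to your expectation it is the \emph{easier} half.

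The genuine gap is in your direction (ii), at the sentence ``any such $X$ would, read off inning by inning, constitute a legal play by II that defeats $\sigma$.'' It would not. Stability only hands you a block sequence $X$ with $\fu(X)\in u$ and, for each of the countably many strategy values $\sigma(s)$, a finite set $F_s$ with $\fu(X\setminus F_s)\subseteq\sigma(s)$. To convert $X$ into an actual run against $\sigma$, player II must at each inning discard the finitely many members of $X$ lying in the current $F_s$; over $\omega$ innings these discards can accumulate to an infinite set, so II's actual play is an infinite, possibly co-infinite sub-block-sequence $X'\subseteq X$, and nothing guarantees $\fu(X')\in u$ --- $\fu$-sets of proper subsequences of $X$ need not belong to $u$ even when $\fu(X)$ does, and II needs $\fu(X')\in u$ to win. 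The missing idea is Galvin's device from the analogous selective-ultrafilter game: since only finitely many positions $s$ satisfy $\max(\bigcup\ran(s))<k$, one feeds stability the \emph{decreasing} sequence $C_k=\bigcap\{\sigma(s)\mid\max(\bigcup\ran(s))<k\}\in u$ and invokes the stronger ``diagonal'' form of stability available for stable \emph{ordered} union ultrafilters (one of the equivalent characterizations in \cite[Theorem 4.2]{blassunion}), which produces $\fu(Y)\in u$ with $\{y\in\fu(Y)\mid\min(y)>k\}\subseteq C_k$ for all $k$. Player II can then legally play the elements of $Y$ themselves in increasing order, discarding nothing, and wins because $\fu(Y)\in u$. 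Without isolating this diagonal strengthening (mere almost-condensation into each $\sigma(s)$ separately is not enough), your plan for direction (ii) does not close.
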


Using the above result, we are ready to prove that the forcing $\mathbb L(u)$ has the Laver property for stable ordered union $u$.

\begin{theorem}\label{laverprop}
If $u\in\gamma\fin$ is a stable ordered union ultrafilter, then $\mathbb L(u)$ satisfies the Laver property.
\end{theorem}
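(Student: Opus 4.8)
The plan is to prove the Laver property for $\mathbb L(u)$ directly from the game characterization of Theorem~\ref{gamechar}, by encoding the task of choosing small $F(n)$ into a strategy for player~I in $\mathscr G(u)$, which must fail to be winning, and then extracting the desired condition $q$ from a play in which player~I loses. So suppose $g:\omega\to\omega$ is in the ground model, $p\in\mathbb L(u)$, and $\mathring f$ is a name with $p\Vdash``\mathring f:\omega\to\omega\wedge\mathring f\le\check g"$. The basic idea: a condition $q\le^0 p$ is determined level-by-level by a choice of $\suc_q(t)\in u$ for each node $t\ge s_q$, and using the pure decision property (more precisely its consequence about names into ground-model finite sets) we can, as we thin out the branching sets, force $\mathring f(n)$ to take only a bounded number of values along each initial segment. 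The number of values is controlled by how fast the tree branches, so by passing to a sufficiently ``slow'' subtree we can get $|F(n)|\le 2^n$.

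In more detail, I would describe the strategy for player~I as follows. Player~I will build, inning by inning, a finite approximation to a pure extension of $p$, together with a bookkeeping list of partial decisions about $\mathring f$. At inning $n$, having committed to the first $n$ levels of the tree above the stem (a finite object, since we only keep finitely many nodes alive at each level by also thinning horizontally), player~I plays the intersection $A_n\in u$ of all the current branching requirements and all sets of the form $-x+A^\star$ demanded by the Galvin--Glazer-style closure (as in Lemma~\ref{laverdiagonalizes}, to keep $\fu$ of the generic inside the relevant sets) — but the crucial addition is that for each of the finitely many nodes $t$ at level $n$, and using pure decision, player~I first shrinks the candidate successor set to one on which, for every extension decided so far, the value $\mathring f(n)$ (relative to a condition with stem reaching past level $n$ through $t\frown\langle x\rangle$) is decided, and records the finitely many possible values. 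Player~II then responds with some $x_n\in A_n$; these $x_n$ accumulate into $X=\{x_n\big|n<\omega\}$, and we let $q$ be the tree whose branches are exactly the sequences compatible with all the thinnings, which is a legitimate condition because every branching set was in $u$.

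Now by Theorem~\ref{gamechar}, since $u$ is stable ordered union, player~I has no winning strategy, so there is a play in which player~II wins, i.e. $\fu(X)\in u$. Along this play, the condition $q$ built as above is below $p$, forces $\mathring f(n)$ to lie in the finite set $F(n)$ of recorded values at level $n$, and — here is where the bound enters — because at level $n$ we had only allowed a controlled amount of branching (at most, say, doubling the number of live nodes at each level, or more carefully: arranging the horizontal thinning so the number of live nodes at level $n$ is at most $2^n$, hence the number of decided values of $\mathring f(n)$ is at most $2^n$), we get $|F(n)|\le 2^n$. Finally one checks $q\Vdash``\mathring f(\check n)\in\check F(\check n)"$ using that the generic branch passes through one of the live nodes at level $n$ and that $\mathring f$ was decided on each.

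The main obstacle, and the part that needs the most care, is organizing the bookkeeping so that the game framework actually applies: player~I in $\mathscr G(u)$ plays a \emph{single} set $A_n\in u$ per inning, but we need to simultaneously control branching at finitely many nodes and make finitely many decisions about $\mathring f$, so one must fold all of these demands into one set in $u$ (using that $u$ is closed under finite intersections and that pure decision lets us take the relevant shrinkings to all lie in $u$), and then recover the individual branching sets $\suc_q(t)$ as supersets of $\fu(X)$-related sets — exactly the maneuver in the proof of Lemma~\ref{extendtoidempotent} and Lemma~\ref{laverdiagonalizes}, where one uses that $x\in\fu(X)$ implies $-x+A\supseteq\fu(X\setminus F)$ for suitable finite $F$. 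Getting the horizontal thinning to respect the $|F(n)|\le 2^n$ bound while keeping the tree branching inside $u$ at every surviving node is the delicate combinatorial point; everything else is a routine fusion-style argument combined with the pure decision property.
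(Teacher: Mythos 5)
Your overall plan --- use the game characterization of Theorem~\ref{gamechar}, build $q$ by a fusion using pure decision, and extract $q$ from a play that player~I loses --- is the right one and matches the paper's strategy in outline. But the step you lean on to get the bound $|F(n)|\le 2^n$ is not available: you propose to ``thin horizontally'' so that ``the number of live nodes at level $n$ is at most $2^n$.'' In $\mathbb L(u)$ every node above the stem must have successor set in $u$, hence infinite; so every level of $q$ above the stem is infinite and no horizontal thinning to finitely many nodes is possible. Since the bound on $|F(n)|$ in your argument is derived precisely from counting live nodes, the argument collapses at its one quantitative point. Relatedly, a single run of $\mathscr G(u)$ produces a single set $\fu(X)\in u$, which can serve as the successor set of \emph{one} node; your description of one game run building the whole condition level-by-level would at best yield a finitely branching tree, which is not a condition of $\mathbb L(u)$.

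The paper's proof gets the bound by a different mechanism, which is the idea your proposal is missing. One plays a \emph{separate} run of the game at each node $t$ of $q$ to produce $\suc_q(t)=\fu(X)$, and the smallness of $F(n)$ comes not from having few nodes but from \emph{homogenization}: at inning $m$ of the run at $t$, since $p\Vdash\mathring f\le\check g$ there are only finitely many possible decisions of $\mathring f\upharpoonright k_{n+m+1}$, so player~I shrinks to a set $A_m\in u$ on which the entire vector of decisions $\langle f^m_s\rangle\frown\langle f^m_{s\bigtriangleup x}\mid x\in\fu(\vec x)\rangle$ (including the union elements, which is the point you correctly flagged as delicate) is constant. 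Thus the infinitely many successors surviving into $\fu(X)$ all contribute the same $2^m$ values, and the final bound $|F(n)|\le 2^n$ is obtained by choosing checkpoints $k_n$ with $2^{k_n}\ge 2^{n+1}h(n+1)$, where $h$ counts the nodes (via an order-preserving bijection with $\omega^{<\omega}$ and a weight $\sum_i\lfloor\log_2(m_i)\rfloor$) whose game runs can affect $F(n)$. You would need to replace your node-counting by this homogenize-over-a-$u$-set step, and organize countably many interleaved game runs (one per node), to complete the proof.
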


\begin{proof}
Let $p\in\mathbb L(u)$, $g:\omega\longrightarrow\omega$ and $\mathring{f}\in V^{\mathbb L(u)}$
be such that 
\begin{equation*}
p\Vdash``\mathring{f}:\check{\omega}\longrightarrow\check{\omega}\text{ and }\mathring{f}\leq\check{g}"
\end{equation*}
We will recursively construct an extension $q\leq p$ that will satisfy that 
\begin{equation*}
q\Vdash``(\forall n<\check{\omega})(\mathring{f}(n)\in\check{F}(n))"
\end{equation*}
for some ground-model slalom $F:\omega\longrightarrow[\omega]^{<\omega}$ satisfying $(\forall n<\omega)(|F(n)|\leq 2^n)$. We first let $h(n)$ be the number of finite sequences of natural numbers $\langle m_1,\ldots,m_k\rangle$ that satisfy $\sum_{i=1}^k \lfloor\log_2(m_i)\rfloor=n$, and we pick and fix any increasing sequence $\langle k_n\big|n<\omega\rangle$ satisfying that $2^{k_n}\geq 2^{n+1} h(n+1)$. We moreover use the fact that for every Laver condition, there is a natural order-preserving bijection between $\omega^{<\omega}$ and the nodes of the condition above the stem.
 
We now define $q$ by induction on the nodes. That is, if we have already decided that a certain $t\in p$ will belong to $q$, we will show how to pick the set of immediate succesors $\suc_{q}(t)$. For this, we will assume that not only have we decided that $t\in q$, but we have also decided which will be the sequence $\langle m_1,\ldots,m_k\rangle$ associated to $t$ under the aforementioned order-preserving bijection (between $\omega^{<\omega}$ and the nodes of $q$ above the stem) and we also assume that we have picked an auxiliary condition $p_t\leq^0 p\upharpoonright t$ (here $p\upharpoonright t$ denotes the condition $\{s\in p\big|s\text{ is comparable with }t\}$) which decides the value of $f\upharpoonright k_n$, where $n=\sum_{i=1}^k \lfloor\log_2(m_i)\rfloor$.
 
Now we play the game $\mathscr G(u)$. The first thing to do is shrink, if necessary, the set $\suc_p(t)$ to a set of the form $\fu(Y)$, so that it is closed under finite unions. This way, at the end of the game we will be able to collect player II's moves $X=\{x_n\big|n<\omega\}$ and we will define $\suc_{q}(t)=\fu(X)$. Player I will adhere to the following strategy. First extend, for each $s\in\suc_p(t)=\fu(Y)$, the condition $p_t\upharpoonright s$ to some condition $p_s^0$ with the same stem deciding the value of $\mathring{f}\upharpoonright k_{n+1}$ to be a certain $f_s^0$. The hypothesis that $p\Vdash``\mathring{f}\leq\check{g}"$ implies that there are only finitely many possible $f_s^0$, so there is a set $A_0\in u$ such that all $p_s^0$ for $s\in A_1$ decide $\mathring{f}\upharpoonright k_{n+1}$ to be the same $f_0$. Player I starts by playing this set, and waits for player II to play some $x_0\in A_0$. The auxiliary condition associated to $x_0$ in order to continue with the induction later on, will be $p_{x_0}^0$. We now extend, for each $s\in\fu(Y)\setminus\{x_0\}$, the condition $p_s^0$ to some further condition $p_s^1$ with the same stem which decides the value of $\mathring{f}\upharpoonright k_{n+2}$ to be some $f_s^1$. Now (and here is the interesting twist), to each such $s$ we associate the pair $\langle f_s^1,f_{s\bigtriangleup x_0}^1\rangle$, and since there are only finitely many possibilities for such a pair, there exists a set $A_1\in u$ such that for all $s\in A_1$ the aforementioned pair is constantly some fixed pair $\langle f_1,f_2\rangle$. Then we let player I play the set $A_1$ and wait for player II's response $x_1\in A_1$. We will let the auxiliary conditions associated to $x_1$ and $x_0\bigtriangleup x_1$ be $p_{x_1}^1$ and $p_{x_0\bigtriangleup x_1}^1$, respectively.
 
In general, if we are about to play the $m$-th inning of the game $\mathscr G(u)$, we assume that we know $\vec{x}=\langle x_i\big|i<m\rangle$ and the auxiliary conditions associated to each $x\in\fs(\vec{x})$, which decide the value of $\mathring{f}\upharpoonright k_{n+\max\{i<m|x_i\subseteq x\}+1}$. We now extend, for each $s\in\fu(Y)\setminus\fu(\vec{s})$, the condition $p_s^{m-1}$ to some pure extension $p_s^m$ which decides the value of $\mathring{f}\upharpoonright k_{n+m+1}$ to be a certain $f_s^m$. Since there are only finitely many possibilities for the vector 
\begin{equation*}
\langle f_s^m\rangle\frown\langle f_{s\bigtriangleup x}^m\big|x\in\fu(\vec{x})\rangle,
\end{equation*}
then there exists an $A_m\in u$ such that for all $s\in A_m$, the aforementioned vector is some fixed $\langle f_{2^{m-1}+1},\ldots,f_{2^m}\rangle$. We let player I play the set $A_m$ and wait for player II's response $x_m\in A_m$, and we establish that the auxiliary condition associated to $x_m$ is $p_{x_m}^m$ and the one associated to $x_m\bigtriangleup x$ will be $p_{x_m\bigtriangleup x}^m$, for each $x\in\fu(\vec{x})$.
 
In the end, since the described strategy cannot be winning, there is a possibility for player II to have won the game, i.e., $\fu(X)\in u$. For each $x\in\fu(X)$, we let the sequence associated to $t\frown\langle x\rangle$ (for the order-preserving bijection with $[\omega]^{<\omega}$) be $\langle m_1,\ldots,m_k\rangle\frown\langle\sum_{x_i\subseteq x}2^i\rangle$, and the induction can continue. It is important to note that, for every $x\in\fu(X)$ and every $j\leq\max\{i<\omega|x_i\in\supp_X(x)\}$, the auxiliary condition $p_{t\frown\langle x\rangle}$ forces the value of $f\upharpoonright k_{n+j+1}$ to agree with some entry of the vector $\langle f_{2^{t-1}+1},\ldots,f_{2^t}\rangle$ (where $t=\lfloor\log_2(j)\rfloor$) which was chosen during the $t$-th run of the 
game $\mathscr G(u)$.

This way we get our condition $q\leq p$ (in fact, $q$ and $p$ have the same stem). It is straightforward to check that, given any $n<\omega$, if $k_{i-1}\leq n<k_i$ (with the convention that $k_{-1}=0$) then $q\Vdash``\mathring{f}(\check{n})\in\check{F(n)}"$, where $F(n)$ is the collection of all entries from the vectors $\langle f_{2^i+1},\ldots,f_{2^{i+1}}\rangle$ obtained when doing the induction over a node $t\in q$ whose associated sequence (under the bijection with $[\omega]^{<\omega}$) is some $\langle m_1,\ldots,m_k\rangle$ satisfying $\sum_{j=1}^k\lfloor\log_2(m_k)\rfloor=i$. Since there are only $h(i)$ many such sequences, it follows that $|F(n)|\leq 2^i h(i)\leq 2^{k_{i-1}}\leq 2^n$.
\end{proof}

Now we know that iterating forcings of the form $\mathbb L(u)$, where each of the ultrafilters $u$ is stable ordered union, will yield a generic extension with a small $\covm$. In order to perform such an iteration in a way that the generic extension itself has stable ordered union ultrafilters, we will at some point need the following result of Eisworth (the way we state the result below is a very particular case of Eisworth's generic existence result from~\cite{eisworth-union}).

\begin{lemma}[See~\cite{eisworth-union}, Theorem~9]\label{extendunion}
If we assume $\ch$, then any filter generated by countably many sets of the form $\fs(X)$, with $X$ a block sequence, can be extended to a stable ordered union ultrafilter.
\end{lemma}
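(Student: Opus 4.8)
The plan is to carry out a transfinite recursion of length $\omega_1$, in the same spirit as the proof of Theorem~\ref{shortfs}, with the rôle played there by Cohen genericity over the intermediate models now taken over by the bookkeeping that $\ch$ affords; accordingly, at each stage of the recursion we shall attend to just one ``reaping'' task and one ``stability'' task rather than to all of them at once. We may assume the filter $\mathscr F$ to be min-unbounded (in the situations where this lemma will be applied that is automatically the case, and a filter failing to be min-unbounded cannot be extended to a union ultrafilter at all). Using $\ch$, fix an enumeration $\langle A_\alpha\big|\alpha<\omega_1\rangle$ of $\wp(\fin)$. We recursively construct an increasing chain $\langle\mathscr F_\alpha\big|\alpha<\omega_1\rangle$ of filters on $\fin$, starting with $\mathscr F_0=\mathscr F$, such that each $\mathscr F_\alpha$ is min-unbounded and is generated by countably many $\fu$-sets of block sequences; at limit stages $\lambda$ we set $\mathscr F_\lambda=\bigcup_{\alpha<\lambda}\mathscr F_\alpha$ (which retains both properties, since $\lambda$ is countable), and in the end $u=\bigcup_{\alpha<\omega_1}\mathscr F_\alpha$ will be the desired ultrafilter.

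The heart of the construction is the successor step. Suppose $\mathscr F_\alpha$ has been built, say with generators $\{\fu(X_n^\alpha)\big|n<\omega\}$. By Lemma~\ref{extendtoidempotent} there is an idempotent $u_\alpha\in\gamma\fin$ extending $\mathscr F_\alpha$; replacing $A_\alpha$ by $\fin\setminus A_\alpha$ if necessary, we may assume $A_\alpha\in u_\alpha$. We now run the ``simultaneous Galvin--Glazer'' recursion that already appears inside the proof of Theorem~\ref{shortfs} (the construction of the block sequences $\vec{x^{\mathscr A}}$ there), with one change: since we are building a single block sequence rather than a whole centred family of them, we no longer need a Cohen real to coordinate them, and can instead pick each block directly from the relevant large set. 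Concretely, having chosen a block sequence $y_0,\ldots,y_{k-1}$, we let $y_k$ be any element with $\min(y_k)>\max(y_{k-1})$ of
\begin{equation*}
A_\alpha^\star\cap\left(\bigcap_{z\in\fu(\{y_0,\ldots,y_{k-1}\})}(-z+A_\alpha^\star)\right)\cap\bigcap_{n\leq k}\left(\fu(X_n^\alpha)^\star\cap\bigcap_{z\in\fu(\{y_n,\ldots,y_{k-1}\})}(-z+\fu(X_n^\alpha)^\star)\right),
\end{equation*}
which is a finite intersection of members of $u_\alpha$, hence itself a member of $u_\alpha$ and in particular infinite and min-unbounded; so $y_k$ exists and the recursion produces an infinite block sequence $Y_\alpha=\{y_k\big|k<\omega\}$. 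The usual Galvin--Glazer argument (precisely as in Lemma~\ref{laverdiagonalizes} and in Theorem~\ref{shortfs}) shows that $\fu(Y_\alpha)\subseteq A_\alpha$ and that $\fu(Y_\alpha\setminus\{y_0,\ldots,y_{n-1}\})\subseteq\fu(X_n^\alpha)$ for every $n$; since every member of $\mathscr F_\alpha$ contains a finite intersection of the $\fu(X_n^\alpha)$'s, and such an intersection contains $\fu(Y_\alpha\setminus F)$ for a suitable finite $F\subseteq Y_\alpha$, every member of $\mathscr F_\alpha$ contains $\fu(Y_\alpha\setminus F)$ for some finite $F$. We let $\mathscr F_{\alpha+1}$ be the filter generated by the $\subseteq$-decreasing family $\{\fu(Y_\alpha\setminus F)\big|F\in[Y_\alpha]^{<\omega}\}$; then $\mathscr F_{\alpha+1}\supseteq\mathscr F_\alpha$ by the previous sentence, $\fu(Y_\alpha)\in\mathscr F_{\alpha+1}$, and $\mathscr F_{\alpha+1}$ is again min-unbounded and generated by countably many $\fu$-sets of block sequences. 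In particular $A_\alpha$ (or its complement) is decided by $u$, so $u$ is an ordered union ultrafilter.

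It remains to interleave the stability requirements. Since $\ch$ gives $\omega_1^\omega=\omega_1$, for each $\beta<\omega_1$ we may enumerate $[\mathscr F_\beta]^{\leq\omega}$ as $\langle\mathscr A_\gamma^\beta\big|\gamma<\omega_1\rangle$; fix also a bijection $\alpha\mapsto(\beta(\alpha),\gamma(\alpha))$ of $\omega_1$ onto $\omega_1\times\omega_1$ with $\beta(\alpha)\leq\alpha$ for all $\alpha$. At stage $\alpha$ we additionally ask the block sequence $Y_\alpha$ above to ``stabilize'' the countable family $\mathscr A_{\gamma(\alpha)}^{\beta(\alpha)}$; this is automatic, for $\mathscr A_{\gamma(\alpha)}^{\beta(\alpha)}\subseteq\mathscr F_{\beta(\alpha)}\subseteq\mathscr F_\alpha$, so every member $B$ of it contains $\fu(Y_\alpha\setminus F)$ for some finite $F$ by the argument above, while $\fu(Y_\alpha)\in\mathscr F_{\alpha+1}\subseteq u$ --- which is exactly what stability of $u$ demands of $\mathscr A_{\gamma(\alpha)}^{\beta(\alpha)}$. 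Since $\omega_1$ is regular, any countable $\mathscr A\subseteq u=\bigcup_{\alpha<\omega_1}\mathscr F_\alpha$ is contained in some $\mathscr F_\beta$, hence equals some $\mathscr A_\gamma^\beta$, hence is stabilized at the stage $\alpha$ with $(\beta(\alpha),\gamma(\alpha))=(\beta,\gamma)$ (note $\beta=\beta(\alpha)\leq\alpha$, so this stage is legitimate). Together with the fact that every subset of $\fin$ is decided, this shows that $u$ is a stable ordered union ultrafilter extending $\mathscr F=\mathscr F_0$.

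I expect the one genuinely delicate point to be the bookkeeping that guarantees every stability task is eventually handled even though its natural domain --- the countable subfamilies of the \emph{final} ultrafilter $u$ --- is not available at the start; this is the familiar device of enumerating, at each stage, the countable subfamilies of the current filter and exploiting the regularity of $\omega_1$. By contrast, the recursion producing the block sequence $Y_\alpha$ cannot get stuck, because at every step the set from which the next block is chosen belongs to the idempotent ultrafilter $u_\alpha$ and is therefore min-unbounded; and the verification that $\mathscr F_{\alpha+1}$ keeps the inductive invariants is a routine use of the facts that $\fu$-sets are closed under disjoint unions of their elements and that an $\fu$-set of a block sequence is automatically min-unbounded.
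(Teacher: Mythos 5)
Your proposal is correct. Note first that the paper does not prove this lemma at all --- it is quoted as a special case of Eisworth's Theorem~9, whose proof of the relevant direction ($\covm=\mathfrak c$ implies generic existence) runs a comeagreness/genericity argument much like the one in Theorem~\ref{shortfs}. What you have done is supply a direct $\ch$ construction instead, reusing machinery already present in the paper: Lemma~\ref{extendtoidempotent} to get an idempotent $u_\alpha\supseteq\mathscr F_\alpha$ at each stage, and the staggered Galvin--Glazer recursion from the proof of Theorem~\ref{shortfs} to produce a single block sequence $Y_\alpha$ that simultaneously lands inside $A_\alpha$ (or its complement) and almost-condenses every generator of $\mathscr F_\alpha$. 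That buys you a proof in which the Cohen real is genuinely unnecessary (you only need one block sequence per stage, not a centred family of continuum many), at the cost of working only under $\ch$ rather than $\covm=\mathfrak c$; for the lemma as stated that is exactly the right trade. Two small remarks. First, your opening caveat is not merely cosmetic: the lemma as literally stated is false without the implicit assumption that the countably many $\fu$-sets generate a \emph{proper, min-unbounded} filter (finite intersections of $\fu$-sets can be empty or min-bounded), and since every union ultrafilter is min-unbounded, no degenerate instance can be extended anyway; in all of the paper's applications the generators form an almost-decreasing chain, so the assumption holds --- you were right to flag and adopt it. Second, your bookkeeping for stability is correct but redundant: since the $Y_\alpha$ built at stage $\alpha$ satisfies that \emph{every} member of $\mathscr F_\alpha$ contains some $\fu(Y_\alpha\setminus F)$, and every countable $\mathscr A\subseteq u$ is contained in some $\mathscr F_\alpha$ by regularity of $\omega_1$, that single $Y_\alpha$ already witnesses stability for all countable subfamilies of $\mathscr F_\alpha$ at once, with no need to enumerate $[\mathscr F_\beta]^{\leq\omega}$ or to fix a pairing of $\omega_1$ with $\omega_1\times\omega_1$. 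You essentially notice this when you observe the requirement is ``automatic''; the enumeration can simply be deleted.
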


With the above tools, we are finally ready to establish the main result of this section.

\begin{theorem}
Suppose that $\mathbf{V}\vDash\ch$. Then there is a countable support iteration $\langle\mathbb P_\alpha,\mathring{\mathbb Q_\alpha}\big|\alpha<\omega_2\rangle$, satisfying
\begin{equation*}
\mathbb P_\alpha\Vdash``\mathring{\mathbb Q_\alpha}=\mathbb L(u)\text{ for some stable ordered union ultrafilter }u"
\end{equation*}
for every $\alpha<\omega_2$, and such that the corresponding generic extension satisfies that $\covm=\omega_1<\omega_2=\mathfrak c$ and that there exists a stable ordered union ultrafilter.
\end{theorem}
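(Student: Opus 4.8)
The plan is to build the iteration $\langle\mathbb P_\alpha,\mathring{\mathbb Q_\alpha}\mid\alpha<\omega_2\rangle$ by recursion on $\alpha$, simultaneously producing names $\mathring{u_\alpha}$ for which $\mathbb P_\alpha\Vdash``\mathring{u_\alpha}\text{ is a stable ordered union ultrafilter}"$ and which form a $\subseteq$-increasing chain, and setting $\mathring{\mathbb Q_\alpha}=\mathbb L(\mathring{u_\alpha})$; this is legitimate because a stable ordered union ultrafilter, being an idempotent element of $\gamma\fin$, is in particular min-unbounded. In $\mathbf V[G]$ I will then take $u=\bigcup_{\alpha<\omega_2}u_\alpha$. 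To keep the chain increasing I exploit Lemma~\ref{laverdiagonalizes}: writing $X_\alpha$ for the $\mathbb L(u_\alpha)$-generic block sequence and $X_\alpha^{(n)}$ for its $n$-th tail, that lemma says that every $A\in u_\alpha$ contains some $\fu(X_\alpha^{(n)})$, so in $\mathbf V^{\mathbb P_{\alpha+1}}$ the filter $\mathscr G_\alpha$ generated by the countable family $\{\fu(X_\alpha^{(n)})\mid n<\omega\}$ of $\fu$-sets already extends $u_\alpha$. Hence at a successor stage I work in $\mathbf V^{\mathbb P_{\alpha+1}}$ --- which still models $\ch$ --- and invoke Eisworth's Lemma~\ref{extendunion} to extend $\mathscr G_\alpha$ to a stable ordered union ultrafilter $u_{\alpha+1}\supseteq\mathscr G_\alpha\supseteq u_\alpha$. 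At stage $0$, $\ch$ in $\mathbf V$ directly supplies $u_0$.

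At a limit stage $\alpha$ of uncountable cofinality no new reals appear (a standard feature of countable support iterations of proper forcing), so $u_\alpha:=\bigcup_{\beta<\alpha}u_\beta$ is an ultrafilter in $\mathbf V^{\mathbb P_\alpha}$; it is stable ordered union there, because any set --- or countable sequence of sets --- witnessing a failure would be captured, together with all the relevant memberships in the $u_\beta$'s, at some stage below $\alpha$, using $\cf(\alpha)>\omega$. At a limit stage $\alpha$ of cofinality $\omega$ I fix in advance, as a ladder system chosen in $\mathbf V$, a cofinal sequence $\langle\beta_n\mid n<\omega\rangle$. Since $\fu(X_{\beta_n})\in\mathscr G_{\beta_n}\subseteq u_{\beta_n+1}\subseteq u_{\beta_{n+1}}$, a further application of Lemma~\ref{laverdiagonalizes} at stage $\beta_{n+1}$ shows that $\fu$ of some tail of $X_{\beta_{n+1}}$ is contained in $\fu(X_{\beta_n})$; more generally the family $\{\fu(X_{\beta_n}^{(k)})\mid n,k<\omega\}$ is downward directed, so it generates a proper filter $\mathscr G_\alpha\in\mathbf V^{\mathbb P_\alpha}$ with a countable subbasis of $\fu$-sets, and $\bigcup_{\beta<\alpha}u_\beta\subseteq\mathscr G_\alpha$. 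Again Eisworth's lemma, applied in $\mathbf V^{\mathbb P_\alpha}\vDash\ch$, extends $\mathscr G_\alpha$ to the required $u_\alpha$. (The non-canonical choices of ultrafilter are turned into genuine names by the maximal principle, or by using a name for a wellorder of $\wp(\fin)$ of order type $\omega_1$.)

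It remains to verify the conclusions. By the standard theory of countable support iterations of proper forcing notions of size $\leq\aleph_1$ over a model of $\ch$ (see~\cite{librogris} and~\cite{shelahproperimproper}), $\mathbb P_{\omega_2}$ is proper and has the $\aleph_2$-chain condition, $\ch$ holds in every $\mathbf V^{\mathbb P_\alpha}$ with $\alpha<\omega_2$ --- which is exactly what licensed the uses of Lemma~\ref{extendunion} above --- and $\mathbf V[G]\vDash\mathfrak c=\aleph_2$. Each iterand $\mathbb L(u_\alpha)$ has the Laver property by Theorem~\ref{laverprop}, and the Laver property is preserved by countable support iteration; hence $\mathbb P_{\omega_2}$ adds no Cohen real over $\mathbf V$, so the $\aleph_1$ Borel meagre sets coded in $\mathbf V$ cover $\mathbb R^{\mathbf V[G]}$ and $\covm=\aleph_1$ in $\mathbf V[G]$. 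Finally $u=\bigcup_\alpha u_\alpha$ is a filter; it is an ultrafilter because any $A\in\wp(\fin)^{\mathbf V[G]}$ lies in some $\mathbf V^{\mathbb P_\alpha}$ with $\alpha<\omega_2$ (by the $\aleph_2$-c.c.), hence is decided by $u_\alpha$; it is ordered union because such an $A\in u$ lies in some $u_\alpha$, which is ordered union in $\mathbf V^{\mathbb P_\alpha}$; and it is stable because any countable $\{A_n\mid n<\omega\}\subseteq u$, together with the relevant memberships, is captured at a single stage $\eta<\omega_2$, where stability of $u_\eta$ furnishes the desired block sequence.

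The step I expect to be the main obstacle is the limit case of cofinality $\omega$. There $\bigcup_{\beta<\alpha}u_\beta$ is $\aleph_1$-generated and fails to be an ultrafilter in $\mathbf V^{\mathbb P_\alpha}$, so neither Eisworth's countable-generation hypothesis nor a bare ``take the union'' argument is directly available. The whole point of interleaving the ultraLaver forcings --- and the reason Lemma~\ref{laverdiagonalizes} was proved in the form it was --- is that the generic block sequence $X_{\beta_n}$ absorbs all of $u_{\beta_n}$ into the single $\fu$-set $\fu(X_{\beta_n})$ modulo a finite shift, so that the $\subseteq^*$-tower of these $\fu$-sets is a countably generated filter sitting above $\bigcup_{\beta<\alpha}u_\beta$, to which Lemma~\ref{extendunion} does apply. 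Once this mechanism is in place, the construction needs no bookkeeping against subsets of $\fin$, since the union of the chain automatically decides every subset of $\fin$ that appears at any stage.
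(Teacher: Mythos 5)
Your proposal is correct and follows essentially the same route as the paper's proof: the same recursion on names $\mathring{u_\alpha}$ with $\mathbb L(\mathring{u_\alpha})$ as iterands, the same use of Lemma~\ref{laverdiagonalizes} to absorb $u_\alpha$ into the countably generated tail filter of the generic block sequence so that Lemma~\ref{extendunion} applies at successors and at $\omega$-cofinal limits, the same union argument at limits of uncountable cofinality and at the top, and the same appeal to Theorem~\ref{laverprop} plus preservation of the Laver property to get $\covm=\omega_1$. The only cosmetic difference is that you justify that the tail filters at an $\omega$-cofinal limit cohere by downward directedness, where the paper observes they all lie in the (increasing union) filter $\bigcup_{\xi<\alpha}u_\xi$; both arguments are valid and rest on the same application of Lemma~\ref{laverdiagonalizes}.
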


\begin{proof}
We recursively define iterands $\mathring{\mathbb Q_\alpha}$ by specifying the (name of the) stable ordered union ultrafilter $\mathring{u_\alpha}\in\mathbf{V}^{\mathbb P_\alpha}$ and stipulating that $\mathbb P_\alpha\Vdash``\mathring{\mathbb Q_\alpha}=\mathbb L(\mathring{u_\alpha})"$. We furthermore let $\mathring{X_\alpha}$ be the name of the generic block sequence added by the $\alpha$-th iterand $\mathring{Q_\alpha}$, which is (forced to be) $\mathbb L(\mathring{u_\alpha})$, to the generic extension obtained after forcing with $\mathbb P_{\alpha}$ (this is originally a $\mathbb P_\alpha$-name, which later on is identified with the corresponding $\mathbb P_\eta$-name, for $\alpha<\eta$, in the usual way). As we go along our construction, we will ensure that whenever $\xi<\alpha<\omega_2$, it is the case that $\mathbb P_\alpha\Vdash``\mathring{u_\xi}\subseteq\mathring{u_\alpha}"$, and also that $\mathbb P_\alpha\Vdash``\fu(\mathring{X_\alpha}\setminus\check{F})\in\mathring{u_{\alpha+1}}"$, for every finite $F$.

Thus, suppose that $\mathbb P_\alpha$ has already been defined, along with the names $\mathring{u_\xi}$ and $\mathring{X_\xi}$, for all $\xi<\alpha$. There are three cases to consider:
\begin{description}
\item[$\alpha$ is a successor ordinal] Say that $\alpha=\xi+1$; in this case we have that $\mathbb P_\alpha=\mathbb P_\xi\star\mathbb L(\mathring{u_\xi})$. By Lemma~\ref{laverdiagonalizes}, in the generic extension obtained after forcing with $\mathbb P_\alpha$, the filter generated by the family $\{\fu(X_\alpha\setminus F)\big|F\text{ is finite}\}$ will extend the ultrafilter $u_\xi$, and consequently it will also extend all of the $u_\eta$ for $\eta<\xi$ (since by inductive hypothesis, $u_\eta\subseteq u_\xi$). We just let $\mathring{u_\alpha}$ be an arbitrary name for a stable ordered union ultrafilter satisfying
\begin{equation*}
\mathbb P_\alpha\Vdash``\mathring{u_\alpha}\text{ extends }\{\fu(\mathring{X_\alpha}\setminus F)\big|F\text{ is finite}\},
\end{equation*}
guaranteed to exist by Lemma~\ref{extendunion}; $\mathring{u_\alpha}$ is as required because it (is forced to) extends all of the $\mathring{u_\eta}$ for $\eta<\alpha$ and it also contains all of the $\fu(\mathring{X_\alpha}\setminus\check{F})$ by definition.

\item[$\alpha$ is a limit ordinal with $\cf(\alpha)=\omega$]
Pick an increasing cofinal sequence $\langle\alpha_n\big|n<\omega\rangle$ converging to $\alpha$. Notice that, if $\xi<\alpha$ and $\mathring{A}$ is such that $\mathbb P_\alpha\Vdash``\mathring{A}\in\mathring{u_\xi}"$, then there will be an $n<\omega$ and a finite $F$ such that $\mathbb P_\alpha\Vdash``\fu(\mathring{X_{\alpha_n}}\setminus\check{F})\subseteq\mathring{A}"$. To see this, just let $n$ be such that $\xi<\alpha_n$, and use the fact that $\mathbb P_\alpha\Vdash``\mathring{u_\xi}\subseteq\mathring{u_{\alpha_n}}"$ along with Lemma~\ref{laverdiagonalizes}. Hence, working in the generic extension obtained after forcing with $\mathbb P_\alpha$, if it were the case that $\mathscr F=\{\fu(X_{\alpha_n}\setminus F)\big|n<\omega\text{ and }F\text{ is finite}\}$ generated a filter, then every ultrafilter extending $\mathscr F$ would extend each of the $u_\xi$ ($\xi<\alpha$) as well. Fortunately, it is straightforward to check that $\mathscr F$ indeed generates a filter, as (since each such $\fu(X_{\alpha_n}\setminus F)$ belongs to $u_{\alpha_n+1}$) $\mathscr F\subseteq\bigcup_{\xi<\alpha}u_\xi$ and the latter is a filter (it being an increasing union of filters). Since $\mathscr F$ is a countable family of $\fu$-sets, a simple application of Lemma~\ref{extendunion} guarantees the existence of a (name for a) stable ordered union ultrafilter $\mathring{u_\alpha}$ extending $\mathscr F$, and therefore satisfying $\mathbb P_\alpha\Vdash``\mathring{u_\xi}\subseteq\mathring{u_\alpha}"$ for all $\xi<\alpha$, as desired.

\item[$\alpha$ is a limit ordinal with $\cf(\alpha)>\omega$]
Working in the generic extension obtained after forcing with $\mathbb P_\alpha$, define $u_\alpha=\bigcup_{\xi<\alpha}u_\xi$, which is by definition a filter with a basis of $\fu$-sets. Since $\alpha$ has uncountable cofinality, every element $A\in\wp(\fin)$ occuring in our extension actually occurs already at stage $\xi$ for some $\xi<\alpha$, at which point we know that either $A$ or $\fin\setminus A$ belongs to $u_\xi\subseteq u_\alpha$, which shows that the filter $u_\alpha$ is indeed maximal. Furthermore, every countable family $\{A_n\big|n<\omega\}\subseteq u_\alpha$ must also occur at some stage $\xi<\alpha$, and therefore (since $u_\xi$ is stable ordered union and every $A_n\in u_\xi$) there is a block sequence $X$, with $\fu(X)\in u_\xi\subseteq u_\alpha$, such that for all $n<\omega$ there is a finite $F$ with $\fu(X\setminus F)\subseteq A_n$. This shows that $u_\alpha$ is a stable ordered union ultrafilter in our generic extension, so we just let $\mathring{u_\alpha}$ be the corresponding $\mathbb P_\alpha$-name, and we are done. 
\end{description}
The above shows that we can actually perform the recursive construction as desired. In the end, letting $u=\bigcup_{\alpha<\omega_2}u_\alpha$, we notice that, since $\omega_2$ is a limit ordinal of uncountable cofinality, the filter $u$ must actually be a stable ordered union ultrafilter, by the exact same reasoning as in the third case of the construction above. Furthermore, by Theorem~\ref{laverprop}, we get that $\covm=\omega_1$ in the generic extension, since $\mathbb P_{\omega_2}$ does not add Cohen reals.
\end{proof}

\section{A single-step iteration}

We will now exhibit a (single-step) forcing notion that adds a generic stable ordered union ultrafilter; this forcing notion has been considered by Blass~\cite{blassunion} and taken advantage of by Eisworth~\cite{eisworth-union}; it has been used afterwards by other authors as well. By ensuring that appropriate conditions hold in the ground model, we will be able to get a generic extension satisfying $\covm<\mathfrak c$, thus yielding yet another model of $\zfc$ that has stable ordered union ultrafilters and a small $\covm$.

\begin{definition}
We will let $\fin^{[\infty]}$ denote the set of all infinite block sequences. This set will be considered as a forcing notion, equipped with the preorder given by almost condensation. In other words, $X\leq Y$ if and only if there exists a finite $F$ such that $X\setminus F\subseteq\fu(Y)$.
\end{definition}

It is hard not to see that this forcing notion is $\sigma$-closed (in fact, given a countably infinite decreasing sequence in $\fin$, a very explicit method to find a common lower bound for this sequence can be found in~\cite[Lemma 3.2]{conmichael-diamond-union}). Therefore this notion does not add any new reals when one passes to a generic extension. The generic filter in such an extension is a stable ordered union ultrafilter.

Thus, whenever one starts with a ground model $\mathbf{V}$ and forces with $\fin^{[\infty]}$, the generic extension $\mathbf{V}[G]$ will contain the stable ordered union ultrafilter $G$. The question now is under which circumstances does the ground model $\mathbf{V}$ ensure that $\mathbf{V}[G]\vDash``\covm<\mathfrak c"$. This section is devoted to exhibiting one such model. The following definition is relevant to this.

\begin{definition}
The cardinal characteristic $\mathfrak h_\fin$ is defined to be the least cardinality of a family of dense sets in $\fin^{[\infty]}$ with a non-dense intersection.
\end{definition}

Thus, $\mathfrak h_\fin$ is just what Balcar, Doucha, and Hru\v s\'ak~\cite{balcar-doucha-hrusak} call the \textit{height} of the forcing notion $\fin^{[\infty]}$. As a matter of fact, the following are all equivalent ways of defining the same cardinal characteristic (substituting $\fin^{[\infty]}$ for any other forcing notion, we still have equivalent definitions for the height of said forcing notion, see, e.g.,~\cite[p. 158]{jech}).

\begin{itemize}
\item $\mathfrak h_\fin$ is the least cardinality of a family of maximal antichains in $\fin^{[\infty]}$ with no common refinement,
\item $\mathfrak h_\fin$ is the distributivity of the Boolean algebra of regular open sets of $\fin^{[\infty]}$,
\item $\mathfrak h_\fin$ is the shortest length of a sequence of ground-model objects (equivalently, ordinals) added by the forcing $\fin^{[\infty]}$.
\end{itemize}

Notice that any two block sequences $X,Y$ are isomorphic, in the following sense: if we let $\langle x_n\big|n<\omega\rangle$ and $\langle y_n\big|n<\omega\rangle$ be increasing enumerations of $X$ and $Y$, respectively (that is, $\max(x_n)<\min(x_{n+1})$, and similarly for the $y_n$), then the function $\varphi:\fu(X)\longrightarrow\fu(Y)$ given by $\varphi(\bigcup_{n\in F}x_n)=\bigcup_{n\in F}y_n$ maps bijectively $\fu(X)$ onto $\fu(Y)$, and furthermore for every condensation $Z$ of $X$, the block sequence $\varphi[Z]$ will be a condensation of $Y$. This shows that the forcing $\fin^{[\infty]}$ is homogeneous in the very strong sense that, for every two conditions $X,Y\in\fin^{[\infty]}$, the downward closure of $X$ is isomorphic to the downward closure of $Y$ (in particular, letting $X=\mathbb 1=\{\{n\}\big|n<\omega\}$, we get that for every condition $X$, the downward closure of $X$ is isomorphic to the whole forcing notion). This implies that, furthermore, we can also equivalently define the cardinal characteristic $\mathfrak h_\fin$ as follows (arguing exactly as in \cite[p. 426]{blasscardinv}):

\begin{itemize}
\item $\mathfrak h_\fin$ is the least cardinality of a family of dense sets of $\fin^{[\infty]}$ that has an empty intersection.
\end{itemize}

Moreover, since the forcing notion $\fin^{[\infty]}$ is an atomless $\sigma$-closed forcing notion of size $\mathfrak c$, by~\cite[Theorem 2.1]{balcar-doucha-hrusak} it has the \textit{base tree property}, meaning that there is a dense subset $\mathscr T\subseteq\fin^{[\infty]}$ which forms a tree (under the order inherited from $\fin^{[\infty]}$) of height $\mathfrak h_\fin$ such that every level of the tree is a maximal antichain, and every node of the tree has $\mathfrak c$-many successors. This in turn implies that forcing with $\fin^{[\infty]}$ is forcing equivalent to forcing with the tree $\mathscr T$ (since $\mathscr T$ is dense in $\fin^{[\infty]}$), and therefore (since every node of the tree has $\mathfrak c$-many successors) it generically adjoins a surjection $:\mathfrak h_\fin\longrightarrow\mathfrak c$ (while not adjoining any reals nor any shorter sequences of ground-model objects).

Given the facts stated in the previous paragraph, if we start with any ground model $\mathbf{V}$ satisfying $\kappa=\covm<\mathfrak h_\fin=\lambda$, then in the generic extension it will be the case that $\kappa=\covm<\mathfrak c=\lambda$ (since $\fin^{[\infty]}$ is $\sigma$-closed, it does not add any reals and so any ground-model witness for $\covm$ will continue to be one in the generic extension). Thus our main task now is to show that there exists a model of $\zfc$ satisfying $\covm<\mathfrak h_\fin$. To see this, we will utilize the following forcing notion. This forcing notion has been considered previously by Garc\'{\i}a-\'Avila~\cite{luzmaria-tesis,luzmaria-forcinghindman}, who denoted it by $\mathbb P_\fin$.

\begin{definition}
The Matet--Mathias forcing notion $\mm$ will be defined as follows: conditions in $\mm$ are pairs $(\vec{x},X)$ such that $\vec{x}=\langle x_0,\ldots,x_n\rangle$ is a finite block sequence in $\fin$, and $X\in\fin^{[\infty]}$; we stipulate that $(\langle x_0,\ldots,x_n\rangle,X)\leq(\langle y_0,\ldots,y_m\rangle,Y)$ if and only if $m\leq n$, $(\forall k\leq m)(x_k=y_k)$, and $\{x_{m-1},\ldots,x_n\}\cup X\sqsubseteq Y$ (in particular this implies that $\{x_{m-1},\ldots,x_n\}\cup X$ is itself a block sequence). The finite block sequence $\vec{x}$ that constitutes the first coordinate of the condition $(\vec{x},X)\in\mm$ will be called the {\it stem} of the condition.
\end{definition}

This forcing notion resembles Mathias forcing in that the stem of a condition $(\vec{x},X)$ is a finite sequence of elements of $\fin$, although it also resembles Matet forcing in that when extending conditions, one is allowed to take finite unions over the elements of the side condition $X$. However, the forcing notion $\mm$ is not forcing equivalent to either Matet or to Mathias forcing (see~\cite[Sections 4.1, 5.2]{luzmaria-tesis} or~\cite[Corollaries 4.5, 5.19]{luzmaria-forcinghindman}). Standard considerations show that, if $u$ is an ordered union ultrafilter, and if we define the guided Matet-Mathias forcing $\mm(u)$ by letting conditions be pairs $(\vec{x},X)$ such that $\fu(X)\in u$, then the mapping $(\vec{x},X)\longmapsto\langle X,(\vec{x},\check{X})\rangle$ constitutes a dense embedding of $\mm$ into the forcing notion $\fin^{[\infty]}\star\mm(\mathring{u})$, where $\mathring{u}$ is the (name for the) generic stable ordered union ultrafilter added by $\fin^{[\infty]}$.

Let $\mathring{Z}$ be an $\mm$-name for the union of all of the $\ran(\vec{z})$, where $\vec{z}$ ranges over the stems of all conditions in the generic filter. We say that $\mathring{Z}$ is the name of the {\it generic block sequence} added by $\mm$. Notice that any condition $(\vec{x},X)$ forces that $\vec{x}$ is an initial segment of $Z$, and that the generic block sequence $Z$ is an almost condensation of $X$ (more concretely, $(\vec{x},X)\Vdash``\mathring{Z}\setminus\ran(\vec{x})\sqsubseteq\check{X}"$). By a straightforward density argument, we see that in the generic extension $Z$ belongs to every dense subset of $\fin^{[\infty]}$ from the ground model. In particular, the forcing notion $\mm$ destroys all ground-model witnesses for $\mathfrak h_\fin$. It is known that this forcing notion satisfies various highly desirable properties that make it ideal for iterating with countable support.

\begin{lemma}\label{properbounding}
The forcing notion $\mm$ satisfies the following properties:
\begin{itemize}
\item The pure decision property, stating that finitary decisions can be made without modifying the stem of a condition. In other words, if $F$ is a (ground model) finite set, $\mathring{a}$ is an $\mm$-name, and $(\vec{x},X)\in\mm$ is a condition satisfying $(\vec{x},X)\Vdash``\mathring{a}\in\check{F}"$, then there exists a condensation $Y\sqsubseteq X$ and an element $b\in F$ such that $(\vec{x},Y)\Vdash``\mathring{a}=\check{b}"$. This is proved in~\cite[Theorem 4.25]{luzmaria-tesis} or~\cite[Theorem 4.21]{luzmaria-forcinghindman}.
\item Properness. This is proved in~\cite[Theorem 4.23]{luzmaria-tesis} or~\cite[Theorem 4.19]{luzmaria-forcinghindman}.
\item The Laver property. This is proved in~\cite[Theorem 4.26]{luzmaria-forcinghindman}.
\end{itemize}
\end{lemma}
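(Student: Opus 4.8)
Since each of the three properties listed is established in the cited works of Garc\'{\i}a-\'Avila, the plan is not to reprove them but to indicate the uniform mechanism behind all three, which is the route I would follow. Every one of the arguments rests on a \emph{fusion} construction adapted to $\mm$: one builds a $\sqsubseteq$-decreasing $\omega$-sequence of side conditions, shrinking at each step so as to absorb one more ``level'' of the generic block sequence, and then takes a common lower bound, whose existence is exactly what the $\sigma$-closure of $\fin^{[\infty]}$ (and the explicit diagonalisation of a $\sqsubseteq$-decreasing sequence of block sequences recalled above) provides. The combinatorial fuel for the shrinking at each level is Hindman's finite unions theorem, applied to a finitely-valued colouring.

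First I would establish the pure decision property, since properness and the Laver property both reduce to it. Given $(\vec{x},X)\Vdash``\mathring{a}\in\check{F}"$ with $F$ a ground-model finite set, the standard Matet--Mathias argument runs a fusion $X=X_0\sqsupseteq X_1\sqsupseteq\cdots$ in which, at stage $n$, one applies Hindman's theorem to thin $X_n$ so that the way $\mathring{a}$ is decided below conditions of the form $(\vec{x}\frown\vec{s},\cdot)$ becomes uniform as one ranges over the finitely many length-$n$ extensions $\vec{x}\frown\vec{s}$ of the stem with $\ran(\vec{s})\subseteq\fu(X_n)$. Passing to a common lower bound $X^*$ and inspecting the finitely many resulting possibilities, the hypothesis that $(\vec{x},X^*)$ still forces $\mathring{a}\in\check{F}$ forces the stem level itself to have made the decision, so that $(\vec{x},X^*)\Vdash``\mathring{a}=\check{b}"$ for some $b\in F$.

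Properness would then follow in the familiar Mathias-style manner: given a countable $M\prec H_\theta$ with $\mm\in M$ and a condition $(\vec{x},X)\in M$, enumerate the dense subsets of $\mm$ lying in $M$ and build, inside $M$, a fusion of conditions all with stem $\vec{x}$ such that, at stage $n$, the $n$-th dense set has been met below every length-$n$ extension of the stem; the pure decision property is precisely what allows each such step to be carried out without lengthening the stem. The side condition of the common lower bound (which need not lie in $M$) yields an $(M,\mm)$-generic condition, establishing properness. For the Laver property, given $(\vec{x},X)\Vdash``\mathring{f}:\omega\longrightarrow\omega\text{ and }\mathring{f}\leq\check{g}"$, I would fuse once more, at stage $n$ using pure decision to pin down the value of $\mathring{f}(n)$ below each of the finitely many length-$n$ stem-extensions available in the current side condition and collecting those values into $F(n)$; pre-shrinking the $n$-th branching level during the fusion keeps the number of such extensions small enough that the bound $|F(n)|\leq 2^n$ can be arranged.

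The main obstacle is not any isolated combinatorial step but the bookkeeping required to merge the countably many level-by-level, stem-by-stem applications of Hindman's theorem (respectively of pure decision) into a single coherent fusion, so that the resulting $\sqsubseteq$-decreasing sequence of side conditions admits a common lower bound in $\fin^{[\infty]}$ and so that this one lower bound simultaneously works for all the stems in play. Getting this bookkeeping right --- which is precisely where $\sigma$-closure is indispensable --- is the technical heart of each of the three proofs.
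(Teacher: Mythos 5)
The paper does not actually prove this lemma; it is stated purely by citation to Garc\'{\i}a-\'Avila's thesis and paper, so there is no in-text argument to compare yours against. That said, your outline is a faithful reconstruction of the route taken in those references: a fusion of side conditions in $\fin^{[\infty]}$, with Hindman-type homogenization at each level to get pure decision, and then properness and the Laver property derived from pure decision by the usual Mathias-style fusion against a countable elementary submodel (respectively, against the levels of a slalom). One point where your sketch, read literally, would break: at stage $n$ of the fusion there are \emph{infinitely} many length-$n$ block sequences $\vec{s}$ with $\ran(\vec{s})\subseteq\fu(X_n)$, so you cannot quantify over ``the finitely many length-$n$ extensions of the stem'' unless you restrict attention to those $\vec{s}$ whose entries are unions of the first $n$ (frozen) blocks of $X_n$ --- this is the standard fix, and your remark about ``pre-shrinking the $n$-th branching level'' in the Laver-property paragraph shows you are aware of it, but it needs to be said in the pure-decision step as well, since otherwise the homogenization at each level is over an infinite index set and a single application of Hindman's theorem (rather than a level-by-level iteration amounting to Milliken--Taylor) does not suffice. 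With that bookkeeping made explicit, your plan is the correct and standard one.
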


This gets us in a position of obtaining a model for $\covm<\mathfrak h_\fin$.

\begin{theorem}\label{largehmodel}
There exists a model of $\zfc$ that satisfies $\covm=\omega_1<\omega_2=\mathfrak h_\fin$
\end{theorem}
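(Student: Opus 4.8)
The plan is to take a countable support iteration $\langle\mathbb P_\alpha,\mathring{\mathbb Q_\alpha}\mid\alpha<\omega_2\rangle$ over a ground model $\mathbf V\vDash\ch$, with $\mathbb P_\alpha\Vdash``\mathring{\mathbb Q_\alpha}=\mm"$ where $\mm$ is computed in $\mathbf V^{\mathbb P_\alpha}$. No bookkeeping is required: since the generic block sequence added by $\mm$ lies below a member of \emph{every} ground-model dense subset of $\fin^{[\infty]}$, a single step of $\mm$ already handles all potential witnesses for $\mathfrak h_\fin$. Write $G_\alpha=G\cap\mathbb P_\alpha$ and $V_\alpha=\mathbf V[G_\alpha]$ for a $\mathbb P_{\omega_2}$-generic $G$. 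By Lemma~\ref{properbounding} each iterand is proper of size $\le\aleph_1$, so a routine induction shows that each $\mathbb P_\alpha$ with $\alpha<\omega_2$ is $\aleph_2$-c.c.\ of size $\le\aleph_1$ and forces $\ch$, and that $\mathbb P_{\omega_2}$ is $\aleph_2$-c.c.; hence all cardinals are preserved, a nice-name count gives $\mathfrak c\le\aleph_2$ in $\mathbf V[G]$, and the $\omega_2$ pairwise distinct generic block sequences added by the iterands give $\mathfrak c=\aleph_2$, so $\mathfrak h_\fin\le\mathfrak c=\aleph_2$. Again by Lemma~\ref{properbounding} each iterand has the Laver property, which is preserved under countable support iterations, so $\mathbb P_{\omega_2}$ has the Laver property and therefore adds no Cohen real over $\mathbf V$; hence every real of $\mathbf V[G]$ lies in a meagre Borel set coded in $\mathbf V$, and as $\mathbf V\vDash\ch$ there are only $\aleph_1$ such sets, so $\covm=\aleph_1$ in $\mathbf V[G]$.

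It remains to prove $\mathfrak h_\fin\ge\aleph_2$, i.e.\ that $\fin^{[\infty]}$ is $\aleph_1$-distributive in $\mathbf V[G]$; by the strong homogeneity of $\fin^{[\infty]}$ it suffices to show that any family $\langle D_\xi\mid\xi<\omega_1\rangle$ of dense open subsets of $\fin^{[\infty]}$ in $\mathbf V[G]$ has nonempty intersection. The idea is to reflect the whole family to a single stage. For a block sequence $X\in\mathbf V[G]$ let $\rho(X)$ be the least $\beta$ with $X\in V_\beta$, and recall that $\fin^{[\infty]}\cap V_\delta=\bigcup_{\beta<\delta}(\fin^{[\infty]}\cap V_\beta)$ whenever $\cf(\delta)>\omega$, since countable support iterations of proper forcings add no reals at limits of uncountable cofinality. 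Because $\cf(\omega_2)=\omega_2>\omega_1$, because $|\mathbb R^{V_\beta}|\le\aleph_1$ for $\beta<\omega_2$, and because the truth value of ``$X\in D_\xi$'' (for fixed $X,\xi$) is decided by a single condition of bounded support, hence by $G_\gamma$ for some $\gamma<\omega_2$, a closing-off argument yields a $\delta<\omega_2$ with $\cf(\delta)=\omega_1$ such that, for every $X\in\fin^{[\infty]}\cap V_\delta$ and every $\xi<\omega_1$: (i) the statement ``$X\in D_\xi$'' is decided by $G_\delta$, and (ii) there is $Y\le X$ with $Y\in D_\xi\cap V_\delta$. Put $E_\xi=D_\xi\cap V_\delta$; by (i) each $E_\xi$ (indeed the sequence $\langle E_\xi\mid\xi<\omega_1\rangle$) belongs to $V_\delta$, and by (ii) each $E_\xi$ is dense, and obviously open, in $(\fin^{[\infty]})^{V_\delta}$.

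Now the $\delta$-th iterand is $\mm$ as computed in $V_\delta$, and by the density argument in the text its generic block sequence $Z\in V_{\delta+1}$ has the property that, for every dense subset $E\in V_\delta$ of $(\fin^{[\infty]})^{V_\delta}$, there is $Y\in E$ with $Z\le Y$. Applying this to each $E_\xi$ we obtain $Y_\xi\in E_\xi\subseteq D_\xi$ with $Z\le Y_\xi$; since $D_\xi$ is downward closed in $(\fin^{[\infty]})^{\mathbf V[G]}$ and both $Z$ and $Y_\xi$ are block sequences of $\mathbf V[G]$, we conclude $Z\in D_\xi$ for every $\xi<\omega_1$, so $Z\in\bigcap_{\xi<\omega_1}D_\xi$, which is therefore nonempty.

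The main obstacle is this lower bound, and within it the delicate point is the ``double'' closing-off: the stage $\delta$ must be chosen so that the restricted dense sets $E_\xi$ are simultaneously \emph{honest elements} of $V_\delta$ (so that the diagonalizing property of the $\delta$-th iterand applies to them) and \emph{still dense} in $(\fin^{[\infty]})^{V_\delta}$. This combines the $\aleph_2$-c.c.\ (to decide $D_\xi$-membership of $V_\delta$-block-sequences already at stage $\delta$) with the reflection of reals at limits of uncountable cofinality, and it requires pinning down precisely what ``the generic block sequence of $\mm$ meets every ground-model dense set'' buys us---namely an \emph{extension} of a member of that set, which is exactly what lets us descend into the downward-closed $D_\xi$. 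This argument is parallel to the classical fact that a countable support iteration of Mathias forcing forces $\mathfrak h=\mathfrak c$, and it makes essential use of Garc\'{\i}a-\'Avila's analysis of $\mm=\mathbb P_\fin$.
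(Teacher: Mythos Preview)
Your proposal is correct and follows essentially the same approach as the paper: a countable-support iteration of $\mm$ of length $\omega_2$ over a $\ch$ ground model, using the Laver property to get $\covm=\omega_1$ and a reflection argument to get $\mathfrak h_\fin=\omega_2$. The paper's proof simply invokes a ``standard reflection argument'' in one sentence; you have carried out that argument in detail, correctly isolating the two requirements on the reflecting stage $\delta$ (that the traces $E_\xi=D_\xi\cap V_\delta$ lie in $V_\delta$ and remain dense there) and then using the diagonalizing property of the $\delta$-th generic block sequence together with downward closure of the $D_\xi$ to finish.
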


\begin{proof}
Let $\mathbf{V}$ be a model of $\ch$, and let us iterate the Matet--Mathias forcing notion $\mm$, $\omega_2$-many times, with countable support. By Lemma~\ref{properbounding}, along with the well-known preservation theorems referenced to in the previous section, the final result of the iteration $\mathbb P_{\omega_2}$ will be a proper forcing notion satisfying the Laver property; in particular, no Cohen reals are added and therefore the generic extension satisfies $\covm=\omega_1$. Furthermore, a standard reflection argument shows that any family of $\omega_1$-many dense subsets of $\fin^{[\omega]}$ in the generic extension will already belong to some intermediate model, and thus will be killed by the subsequent iteration of $\mm$. Therefore the generic extension also satisfies $\mathfrak h_\fin=\omega_2$.
\end{proof}

\begin{corollary}
It is consistent with $\zfc$ that forcing with $\fin^{[\infty]}$ (which generically adds a stable ordered union ultrafilter) produces a generic extension satisfying $\covm<\mathfrak c$.
\end{corollary}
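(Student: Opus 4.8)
The plan is to assemble the corollary directly from Theorem~\ref{largehmodel} together with the facts about $\fin^{[\infty]}$ recorded at the beginning of this section. First I would take as ground model the model $\mathbf{V}$ of $\zfc$ produced by Theorem~\ref{largehmodel}, so that $\mathbf{V}\vDash\covm=\omega_1<\omega_2=\mathfrak h_\fin$. Since that model is obtained by a countable support iteration of proper forcing of length $\omega_2$ over a model of $\ch$, it also satisfies $\mathfrak c\le\omega_2$; as $\mathfrak h_\fin\le\mathfrak c$ in $\zfc$, we get $\mathbf{V}\vDash\covm=\omega_1<\omega_2=\mathfrak h_\fin=\mathfrak c$. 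Now force with $\mathbb P=\fin^{[\infty]}$ over $\mathbf{V}$, and let $G$ be $\mathbb P$-generic; as already observed, $G$ is a stable ordered union ultrafilter in $\mathbf{V}[G]$, and in particular a union ultrafilter, so it only remains to check that $\mathbf{V}[G]\vDash\covm<\mathfrak c$.

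For the value of $\covm$, I would use that $\fin^{[\infty]}$ is $\sigma$-closed and hence adds no new reals: a ground-model family of $\omega_1$-many Borel meagre sets whose union is $\mathbb R$ stays such a family in $\mathbf{V}[G]$, since Borel codes and meagreness are absolute and $\mathbb R$ is unchanged. Thus $\mathbf{V}[G]\vDash\covm\le\omega_1$, and since $\covm\ge\omega_1$ always, $\mathbf{V}[G]\vDash\covm=\omega_1$. For the continuum, I would invoke the base tree property: forcing with $\fin^{[\infty]}$ adds no sequences of ground-model objects of length $<\mathfrak h_\fin=\omega_2$, and in particular no new $\omega_1$-sequences of ordinals; being $\sigma$-closed it also preserves $\omega_1$, so $\omega_2$ (of uncountable cofinality) is not collapsed. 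Combining this with the absence of new reals, the set $(2^{\aleph_0})^{\mathbf{V}}$, of size $\omega_2$, is exactly the set of reals of $\mathbf{V}[G]$, and $\omega_2$ being preserved this yields $\mathbf{V}[G]\vDash\mathfrak c=\omega_2$. (Equivalently, the generic surjection $\mathfrak h_\fin^{\mathbf{V}}\to(2^{\aleph_0})^{\mathbf{V}}$ coming from the base tree bounds $\mathfrak c^{\mathbf{V}[G]}$ above by $\omega_2$, while $\omega_2$ not being collapsed bounds it below.) Hence $\mathbf{V}[G]\vDash\covm=\omega_1<\omega_2=\mathfrak c$, and the corollary follows.

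There is essentially no substantive obstacle here, since every ingredient has already been isolated: Theorem~\ref{largehmodel} supplies the ground model, and the earlier discussion supplies the behaviour of $\fin^{[\infty]}$ ($\sigma$-closedness, the base tree property, and the fact that its generic filter is a stable ordered union ultrafilter). The only point demanding a little care is the computation of $\mathfrak c$ in $\mathbf{V}[G]$, namely the verification that the $\fin^{[\infty]}$-extension does not collapse $\omega_2$; this is dispatched by the clause of the base tree property that no short sequences of ground-model ordinals are added.
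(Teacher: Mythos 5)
Your proposal is correct and follows essentially the same route as the paper: take the model of Theorem~\ref{largehmodel} as ground model, use $\sigma$-closedness (no new reals) to preserve the ground-model witness for $\covm=\omega_1$, and use the fact that $\fin^{[\infty]}$ adds no sequences of ground-model objects of length $<\mathfrak h_\fin=\omega_2$ to keep $\mathfrak c=\omega_2$ in the extension. Your write-up is somewhat more explicit than the paper's about why $\omega_2$ is not collapsed, but the underlying argument is identical.
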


\begin{proof}
Take the model from Theorem~\ref{largehmodel}. Since $\fin^{[\infty]}$ is $\mathfrak h_\fin$-closed and $\covm<\mathfrak h_\fin$, the value of $\covm$ will not be changed by this forcing. Furthermore, the generic extension by $\fin^{[\infty]}$ contains the same reals as the ground model, and no new $\omega_1$-sequences of reals, and therefore in the generic extension we have $\mathfrak c=\omega_2$.
\end{proof}

In the previous corollary, we force over a ground model satisfying $\mathfrak h_\fin=\omega_2$, so it is natural to ask what is the value of $\mathfrak h_\fin$ in the extension by $\fin^{[\infty]}$. We conjecture that, in fact, if our ground model is that of Theorem~\ref{largehmodel}, then $\mathfrak h_\fin$ becomes $\omega_1$ in the generic extension. Equivalently, our conjecture can be stated as saying that in the model from Theorem~\ref{largehmodel}, the value of $\mathfrak h_\fin^2$ --the distributivity number of the forcing notion $\fin^{[\infty]}\times\fin^{[\infty]}$ that adds two mutually generic stable ordered union ultrafilters-- is equal to $\omega_1$. This would be analogous to the result of Shelah and Spinas~\cite{she-spi-h-h2} that $\mathfrak h_2=\omega_1<\omega_2=\mathfrak h$ in Mathias's model.

\bigskip

We conclude this article by stating a few remarks about the values of other cardinal characteristics of the continuum in the models that we have obtained. The family of models considered in Section 2 satisfies that $\mathfrak d<\mathfrak c$, although I do not know if this is unavoidable. However, since Cohen reals are splitting, it follows that all of the models from Section 2 satisfy that $\mathfrak r<\mathfrak c$ (and consequently also $\mathfrak b<\mathfrak c$. The model from Section 3, on the other hand, satisfies that $\mathfrak b=\mathfrak c$, which implies that $\mathfrak d=\mathfrak r=\mathfrak c$ as well, in sharp contrast with the models from Section 2.

When comparing the model from this section with that of Section 3, we were not able to find any cardinal characteristic whose values in each of these models differ. To analyze the model from this section, notice that, after iterating the forcing $\mm$ but before forcing with $\fin^{[\infty]}$ we have that $\mathfrak h_\fin=\mathfrak c$; and this invariant is provably $\leq\!\mathfrak h$ (in fact, it is $\leq\!\mathfrak h_2$, since the Rudin-Keisler images of the generic stable ordered union ultrafilter under the mappings $\max$ and $\min$ give rise to two mutually generic selective ultrafilters). Hence we will have that $\mathfrak b=\mathfrak c$; at this point we proceed to force with $\fin^{[\infty]}$, which is $\sigma$-closed and hence does not add any new reals, nor does it add any new $\omega_1$-sequence of reals, and therefore in the final model we still have that $\mathfrak b=\mathfrak c$ (and consequently $\mathfrak d=\mathfrak r=\mathfrak c$ as well). Looking at Cicho\'n's diagram, the only two cardinal characteristics whose value is not determined by the above considerations are $\nonn$ and $\covn$. For $\covn$, notice that it will have value $\omega_1$ both in the ultraLaver extension from Section 3, and in the model obtained by iterating $\mm$; this is due to the fact that both iterations satisfy the Laver property, and hence they do not add any Random reals by~\cite[Lemma 7.2.3]{librogris}, hence the ground-model null sets cover the new real line. Since $\fin^{[\infty]}$ does not add any new reals nor any new Borel sets of reals, forcing with it will not change the fact that $\covn=\omega_1$; therefore both the model from Section 3 and the one from the present section satisfy that $\covn=\omega_1$. On the other hand, the value of $\nonn$ will be $\mathfrak c$ in both of these models. To see this, for the model of Section 3, notice that the generic ultraLaver real is not split by any ground-model real (it in fact reaps them all), and therefore we have that ultraLaver forcing destroys any ground-model witnesses for $\mathfrak s$; so in the final extension we will have that $\mathfrak s=\mathfrak c$. Now, for the model of the present section, since we argued that the extension obtained after iterating $\mm$ satisfies $\mathfrak h=\mathfrak c$, then we will also have that $\mathfrak s=\mathfrak c$ (since the inequality $\mathfrak h\leq\mathfrak s$ is provable in $\zfc$); this fact will not be changed by forcing with a $\sigma$-closed forcing notion with distributivity number equal to $\mathfrak s$. Hence both the models from Section 3 and from the present section satisfy $\mathfrak s=\mathfrak c$, which implies that they also satisfy $\nonn=\mathfrak c$ as the inequality $\mathfrak s\leq\nonn$ is provable in $\zfc$. Thus, the values of all cardinal characteristics from Cicho\'n's diagram, as well as a few others, have been proven to be equal in both of these models.

\begin{question}
Is there a cardinal characteristic of the continuum whose value in the model from Section 3 (iteration of appropriate ultraLaver forcings) is different from its value in the model from this section (iteration of the Matet--Mathias forcing followed by a single-step forcing with $\fin^{[\infty]}$)?
\end{question}

Finally, we mention two possible topics for future research. It is worth noting that, in the constructions presented in this article, we essentially have no control over the value of the cardinal characteristic $\mathfrak d$ (since we have to add dominating reals in every step of every iteration considered), so it would be interesting to see whether the techniques of Blass and Shelah~\cite{blass-shelah-firstmatrix} can be adapted to this context, and one can thus obtain models with union ultrafilters and $\covm<\mathfrak c$, but with more variability on the value of $\mathfrak d$. Another fact worth noticing is that, in all of the models constructed in this article, our stable ordered union ultrafilters cannot have small (i.e., of size $\omega_1$) bases, whereas, e.g., the stable ordered union ultrafilters from~\cite{conmichael-diamond-union} do have small bases; some investigation about the minimal size of a stable ordered union ultrafilter basis in models that have such ultrafilters is almost certain to generate interesting results (the author is very thankful to the anonymous referee for pointing out these two suggestions for further research).

{\bf Acknowledgments:} In its origins, the problem of obtaining models of $\zfc$ with union ultrafilters and a small $\covm$ was addressed by the author in his Ph.D. thesis~\cite{myphdthesis}, written at York University under the supervision of Juris Stepr\=ans. In fact, the main result from Section 3 of this article is Theorem~4.26 on said thesis. Meanwhile, the results from Section 2 here vastly generalize other results from there; more concretely, Theorems~4.20 and~4.23 on~\cite{myphdthesis} are both subsumed by our Corollary~\ref{fsmodel}. The material from Section 4 is completely new. The author is grateful to Juris Stepr\=ans for useful discussions during his (the author's) time as a Ph.D. student, to Andreas Blass for other useful discussions afterwards, and to David Chodounsk\'y for pointing out reference~\cite{balcar-doucha-hrusak} (which rescued me from a fair amount of re-inventing the wheel).

\end{document}